\theoremstyle{plain}
\newtheorem{thm}{Theorem}[section]
\newtheorem{cor}[thm]{Corollary}
\newtheorem{lemma}[thm]{Lemma}
\newtheorem{prop}[thm]{Proposition}
\newtheorem{question}[thm]{Question}
\theoremstyle{remark}
\newtheorem*{defn}{\textbf{Definition}}
\newtheorem*{ex}{\textbf{Example}}
\newtheorem*{rmk}{\textbf{Remark}}
\numberwithin{equation}{section}
\newcommand{\Av}{\mathrm{Av}}
\newcommand{\comp}{\mathrm{comp}}
\newcommand{\Exp}{\mathrm{Exp}}
\newcommand\M{\mathbb M}
\newcommand\N{\mathbb N}
\newcommand{\ol}{\overline}
\newcommand{\Op}{\mathrm{Op}}
\newcommand\R{\mathbb R}
\newcommand{\supp}{\mathrm{supp}\,}
\newcommand{\ve}{\varepsilon}
\newcommand{\Vol}{\mathrm{Vol}}
\newcommand{\w}{\omega}
\title[Small scale quantum ergodicity]{Small scale quantum ergodicity in negatively curved manifolds}
\author{Xiaolong Han}
\address{Department of Mathematics, The Australian National University, Canberra, ACT 2601, Australia}
\email{Xiaolong.Han@anu.edu.au}
\subjclass[2010]{58G25, 35P20, 37D20, 58G15}
\keywords{Quantum ergodicity in small scales, asymptotic equidistribution of eigenfunctions, negatively curved manifolds, exponential decay of correlation}
\thanks{Research is partially supported by the Australian Research Council through Discovery Project DP120102019.}
\date{}
\begin{document}
\maketitle

\begin{abstract}
In this paper, we investigate quantum ergodicity in negatively curved manifolds. We consider the symbols depending on a semiclassical parameter $h$ with support shrinking down to a point as $h\to0$. The rate of shrinking is a power of $|\log h|$. This extends the asymptotic equidistribution of quantum ergodic eigenfunctions to a logarithmical scale. 
\end{abstract}

\section{Introduction}
Quantum ergodicity studies the quantized counterpart of a classical dynamical system that is ergodic. In this paper, we consider the geodesic flow in a manifold. Let $(\M,g)$ be a compact and smooth Riemannian manifold of dimension $n$ without boundary. Denote a state as $(x,\xi)$ in the cotangent bundle $T^*\M$. Let $H(x,\xi)=|\xi|^2_x$ be the Hamiltonian, where $|\cdot|_x$ is the induced metric in $T_x^*\M$. Then the geodesic flow $G_t$ is generated by $X_H$, the Hamiltonian vector field of $H$. It preserves the canonical symplectic form $\w=d\xi\wedge dx$, hence preserves the Liouville volume form $\mu=\w^n/n!$. 

Let $S^*\M=\{(x,\xi)\in T^*\M:|\xi|_x=1\}$, i.e. the energy layer at $H(x,\xi)=1$. Then $G_t$ preserves $S^*\M$, and induces an invariant Liouville measure $\mu_L$ on $S^*\M$. We say that $G_t$ is ergodic on $S^*\M$ with respect to $\mu_L$ if any invariant subset of $S^*\M$ under $G_t$ has $\mu_L$-measure $0$ or $\mu_L(S^*\M)$. We normalize the Liouville measure and let $\mu_1=\mu_L/\mu_L(S^*\M)$.

The quantum system of $(S^*\M,G_t)$ involves the eigenfunctions of the (positive) Laplace-Beltrami operator $\Delta=\Delta_g$: Write $\{u_j\}_{j=0}^\infty$ as an orthonormal basis of eigenfunctions (ONBE) of $\Delta$ with eigenvalues $\lambda_j^2$, i.e. $\Delta u_j=\lambda_j^2u_j$.

The quantum ergodic theorem of \v Snirel'man-Zelditch-Colin de Verdi\`ere \cite{Sn, Ze1, CdV} illustrates the correspondence between the classical system (geodesic flow on $S^*\M$) and the quantum system (ONBE) when $G_t$ is ergodic on $S^*\M$ with respect to the Liouville measure $\mu_1$.

\begin{thm}[Quantum ergodicity, high-energy version]\label{thm:QE} 
Assume that $G_t$ is ergodic on $S^*\M$ with respect to the Liouville measure $\mu_1$. Then
\begin{equation}\label{eq:QE}
S_1(\lambda,a)=\frac{1}{N(\lambda)}\sum_{\lambda_j\le\lambda}\Big|\langle\Op(a)u_j,u_j\big\rangle-\mu_1(a)\Big|=o(1)\quad\text{as }\lambda\to\infty,
\end{equation}
for any orthonormal basis of eigenfunctions $\{u_j\}_{j=0}^\infty$.
\end{thm}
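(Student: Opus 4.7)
The plan is to reduce the sum $S_1(\lambda,a)$ to a variance sum, and then exploit classical ergodicity of $G_t$ via a time-averaging trick based on Egorov's theorem together with the local Weyl law. First I would subtract the mean: replacing $a$ by $a-\mu_1(a)$, the problem becomes showing that
\begin{equation*}
S_1(\lambda,a)=\frac{1}{N(\lambda)}\sum_{\lambda_j\le\lambda}\bigl|\langle\Op(a)u_j,u_j\rangle\bigr|\longrightarrow 0
\end{equation*}
under the assumption $\mu_1(a)=0$. By Cauchy--Schwarz, $S_1(\lambda,a)^2\le S_2(\lambda,a)$, where
\begin{equation*}
S_2(\lambda,a)=\frac{1}{N(\lambda)}\sum_{\lambda_j\le\lambda}\bigl|\langle\Op(a)u_j,u_j\rangle\bigr|^2,
\end{equation*}
so it suffices to show $S_2(\lambda,a)=o(1)$.

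Next I would introduce, for each fixed $T>0$, the classical time average
\begin{equation*}
\langle a\rangle_T(x,\xi)=\frac{1}{T}\int_0^T a\circ G_t(x,\xi)\,dt,
\end{equation*}
which is a smooth symbol of the same order as $a$. Because $u_j$ is an eigenfunction of $\Delta$, the operator $e^{it\sqrt\Delta}$ leaves each spectral line invariant up to a scalar phase, so $\langle\Op(a)u_j,u_j\rangle=\langle e^{-it\sqrt\Delta}\Op(a)e^{it\sqrt\Delta}u_j,u_j\rangle$. Egorov's theorem (applied with $T$ fixed and $\lambda\to\infty$) then yields
\begin{equation*}
\langle\Op(a)u_j,u_j\rangle=\bigl\langle\Op(\langle a\rangle_T)u_j,u_j\bigr\rangle+O_T(\lambda_j^{-1}).
\end{equation*}
Applying Cauchy--Schwarz again and the $L^2$-boundedness of $\Op(\langle a\rangle_T)$, we deduce
\begin{equation*}
S_2(\lambda,a)\le \frac{1}{N(\lambda)}\sum_{\lambda_j\le\lambda}\bigl\langle\Op\bigl(|\langle a\rangle_T|^2\bigr)u_j,u_j\bigr\rangle+o_T(1),
\end{equation*}
where I have also used the sharp G\aa rding inequality to pass from $|\Op(\langle a\rangle_T)u_j|^2$ to $\Op(|\langle a\rangle_T|^2)$ modulo lower-order terms.

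Now the local Weyl law (a standard consequence of the Hörmander--Duistermaat--Guillemin spectral theorem, whose summed form underlies \eqref{eq:QE}) gives, for any fixed smooth symbol $b$,
\begin{equation*}
\frac{1}{N(\lambda)}\sum_{\lambda_j\le\lambda}\langle\Op(b)u_j,u_j\rangle\xrightarrow[\lambda\to\infty]{}\mu_1(b).
\end{equation*}
Applying this with $b=|\langle a\rangle_T|^2$ yields $\limsup_{\lambda\to\infty}S_2(\lambda,a)\le\bigl\|\langle a\rangle_T\bigr\|_{L^2(S^*\M,\mu_1)}^2$. Finally, since $G_t$ is ergodic on $(S^*\M,\mu_1)$ and $\mu_1(a)=0$, the von Neumann mean ergodic theorem gives $\|\langle a\rangle_T\|_{L^2(S^*\M,\mu_1)}\to 0$ as $T\to\infty$. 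Taking first $\lambda\to\infty$ and then $T\to\infty$ completes the proof.

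The main technical obstacle is the order of limits: Egorov's theorem only controls $e^{-it\sqrt\Delta}\Op(a)e^{it\sqrt\Delta}$ up to an $O_T(\lambda^{-1})$ remainder whose constant grows in $T$, so $T$ must be fixed before the semiclassical limit $\lambda\to\infty$, and only afterwards can one invoke the classical ergodic theorem. Extending the argument so that $T=T(\lambda)$ is allowed to grow (necessary for \emph{small-scale} quantum ergodicity as developed in the rest of the paper) requires a quantitative form of Egorov valid up to the Ehrenfest time, which is precisely where negative curvature and exponential decay of correlations enter.
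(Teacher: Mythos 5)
Your proof is the standard \v Snirel'man--Zelditch--Colin de Verdi\`ere argument and is correct in essence. The paper does not itself reprove Theorem \ref{thm:QE} (it is cited from \cite{Sn,Ze1,CdV}), but Section 4 establishes the small-scale semiclassical analog (Theorem \ref{thm:deltaQE}) following exactly the blueprint you describe: reduce to $\mu_1(a)=0$ and compactly supported symbols, time-average via Egorov's theorem, invoke a Weyl-law/trace-formula input (Proposition \ref{prop:STF}), and then the ergodic theorem. Two small remarks: the step passing from $\Op(\langle a\rangle_T)^\star\Op(\langle a\rangle_T)$ to $\Op(|\langle a\rangle_T|^2)$ requires only the pseudodifferential composition calculus, not the sharp G\aa rding inequality; and your identification of the order-of-limits obstruction ($T$ fixed before $\lambda\to\infty$) is precisely what the paper overcomes by extending Egorov to the Ehrenfest time and replacing the qualitative mean ergodic theorem by Liverani's quantitative rate of ergodicity.
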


We call it the high-energy version since it is on the high-energy limit $\lambda^2\to\infty$. Here,  
\begin{itemize}
\item $N(\lambda)=\#\{j:\lambda_j\le\lambda\}$ is the eigenfrequency counting function; Weyl's law yields that  
$$N(\lambda)=\frac{\mu(B^*\M)}{(2\pi)^n}\lambda^n+O(\lambda^{n-1}),$$
where $B^*\M=\{(x,\xi)\in T^*\M:|\xi|_x\le1\}$ is the coball bundle of $\M$. See e.g. H\"ormander \cite{Ho}.
\item $\langle\cdot,\cdot\rangle$ is the inner product in $L^2(\M)$ with respect to the Riemannian volume $\Vol=\Vol_g$ induced by the metric $g$:
$$\langle f,g\rangle=\int_\M f\,\ol g\,d\Vol\quad\text{for }f,g\in L^2(\M).$$
\item $\Op(a)$ is a classical pseudodifferential operator of order $0$ with principal symbol $a$. 
\item $\mu_1(a)$ is the space-average of $a$ with respect to the Liouville measure $\mu_1$ on $S^*\M$:
$$\mu_1(a)=\int_{S^*\M}a\,d\mu_1.$$
\end{itemize}

If all the sectional curvatures of $\M$ are negative everywhere, then we say that $\M$ is negatively curved. In a negatively curved manifold, it is well known that the geodesic flow $G_t$ is ergodic on $S^*\M$ with respect to the Liouville measure. (See e.g. Katok and Hasselblatt \cite{KH}.) Moreover, $G_t$ is an Anosov flow and displays stronger properties than ergodicity, e.g. central limiting, strong-mixing, and exponentially decay of correlations, etc. (See Section \ref{sec:geo} for more discussion.) In this case, one expects a more qualitative version of Theorem \ref{thm:QE}. Define the $p$-moment
$$S_p(\lambda,a)=\frac{1}{N(\lambda)}\sum_{\lambda_j\le\lambda}\Big|\langle\Op(a)u_j,u_j\big\rangle-\mu_1(a)\Big|^p\quad\text{for }p>0.$$
Zelditch \cite{Ze2} explored Ratner \cite{R}'s central limit theorem of $G_t$ to improve Theorem \ref{thm:QE} in negatively curved manifolds to
\begin{equation}\label{eq:Zelditch}
S_p(\lambda,a)=O\left((\log\lambda)^{-\frac p2}\right).
\end{equation}
See also Schubert \cite{Sc} in a setting with more general Hamiltonian. Sarnak \cite{Sa1} conjectured that on compact hyperbolic surfaces, 
$$S_1(\lambda,a)=O\left(\lambda^{-\frac14+\ve}\right)$$
for any small $\ve>0$. In fact, his conjecture is on the individual terms:
$$\Big|\big\langle\Op(a)u_j,u_j\big\rangle-\mu_1(a)\Big|=O\left(\lambda_j^{-\frac14+\ve}\right),$$
which would imply that 
\begin{equation}\label{eq:QUE}
\big\langle\Op(a)u_j,u_j\big\rangle\to\mu_1(a)\quad\text{as }j\to\infty.
\end{equation}
Quantum unique ergodicity (QUE) conjecture states that \eqref{eq:QUE} is valid on any compact manifold of (possibly variable) negative curvature, see 
Rudnick and Sarnak \cite{RS}. The statement is on the asymptotic behavior of the whole ONBE $\{u_j\}_{j=0}^\infty$. However, it is very difficult to prove such result. Restricting to certain orthonormal bases of eigenfunctions, QUE has been verified in special cases when $(\M,g)$ is arithmetic, by Lindenstrauss \cite{Lin}, Silbermann and Venkatesh \cite{SV}, Holowinsky and Soundararajan \cite{HS}, and Brooks and Lindenstrauss \cite{BL}.

In the case when $G_t$ is only ergodic, Theorem \ref{thm:QE} implies that \eqref{eq:QUE} holds for ``\textit{almost all}'' (that is, full density as defined in the following) eigenfunctions in any ONBE. 
We define the density $D$ of a subsequence $J=\{j_k\}\subset\N$ as
$$D(J)=\lim_{N\to\infty}\frac{\#\{j_k<N\}}{N}\quad\text{if it exists}.$$
When $D=0$ ($>0$ or $=1$), we call such subsequence a zero (positive or full) density subsequence. The full density subsequence is also called density one subsequence. It follows from a very standard argument to extract a full density subsequence of $\{u_j\}$ for which \eqref{eq:QUE} is true, using $S_p(\lambda,a)=o(1)$ for some $p>0$. See e.g. \cite[Lemma 3]{Ze1}.

\begin{cor}\label{cor:QE}
Assume that $G_t$ is ergodic on $S^*\M$ with respect to the Liouville measure $\mu_1$. Given any ONBE $\{u_j\}_{j=0}^\infty$, there exists a full density  subsequence of eigenfunctions $\{u_{j_k}\}\subset\{u_j\}$ such that 
$$\big\langle\Op(a)u_{j_k},u_{j_k}\big\rangle\to\mu_1(a)\quad\text{as }k\to\infty,$$
for each classical pseudodifferential operator $\Op(a)$ of order $0$. Then $\{u_{j_k}\}$ is called a quantum ergodic subsequence of eigenfunctions.
\end{cor}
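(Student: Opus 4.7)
The plan is to derive individual convergence along a density-one subsequence from the averaged statement in Theorem \ref{thm:QE}, and then to use a diagonal extraction so that the chosen subsequence works for every order-zero pseudodifferential operator simultaneously.

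First I fix a single classical symbol $a$ of order zero and set $b_j(a) = |\langle\Op(a)u_j,u_j\rangle-\mu_1(a)|$. Theorem \ref{thm:QE} gives $(1/N(\lambda))\sum_{\lambda_j\le\lambda}b_j(a)\to 0$. For each integer $m\ge 1$, Chebyshev's inequality applied to this Ces\`aro mean shows that the exceptional set $E_m(a)=\{j:b_j(a)>1/m\}$ has density zero. Since $E_1(a)\subset E_2(a)\subset\cdots$ and each $E_m(a)$ has density zero, a standard nested union construction (choose $N_m\uparrow\infty$ so that $\#(E_m(a)\cap[1,N])<N/m$ for $N\ge N_m$, and set $F(a)=\bigcup_m E_m(a)\cap[N_m,N_{m+1})$) produces a density-zero set $F(a)\subset\N$ along whose complement $b_j(a)\to 0$; this is the step carried out in \cite[Lemma~3]{Ze1}.

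Next, because $S^*\M$ is a compact manifold, the space of zeroth order principal symbols (equivalently, $C^\infty(S^*\M)$) is separable in its natural seminorm topology. I choose a countable family $\{a_k\}_{k\ge 1}$ that is dense in this topology, apply the previous step to each $a_k$ to obtain density-zero sets $F(a_k)$, and diagonalize once more to produce a single density-zero set $F\subset\N$ such that $b_j(a_k)\to 0$ as $j\to\infty$ outside $F$, simultaneously for every $k$. Enumerating $\N\setminus F$ as $\{j_k\}$ yields the desired full-density subsequence $\{u_{j_k}\}$, along which $\langle\Op(a_k)u_{j_k},u_{j_k}\rangle\to\mu_1(a_k)$ for every $k$.

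Finally, given an arbitrary order-zero $\Op(a)$ and $\ve>0$, I pick $a_k$ close to $a$ in enough symbol seminorms. The Calder\'on--Vaillancourt theorem then bounds $\|\Op(a)-\Op(a_k)\|_{L^2\to L^2}\le C\ve$, so $|\langle(\Op(a)-\Op(a_k))u_{j_k},u_{j_k}\rangle|\le C\ve$, and $|\mu_1(a)-\mu_1(a_k)|\le\ve$ is immediate; combined with the convergence along the dense family, this closes the proof. The only real obstacle is the need for a \emph{single} subsequence that works for all symbols; the double diagonalization handles this, with the crucial analytic input being the uniform $L^2$-boundedness of zeroth order pseudodifferential operators in terms of their symbol seminorms.
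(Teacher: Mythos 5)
Your proof is correct and follows exactly the standard Zelditch-style extraction argument that the paper itself cites (Lemma~3 of~\cite{Ze1}): Chebyshev on the Ces\`aro average to get density-zero exceptional sets for each fixed symbol, a nested-union construction to trade countably many thresholds for a single density-zero set, a diagonalization over a countable dense family of symbols, and the Calder\'on--Vaillancourt bound to pass from the dense family to all order-zero operators. There is no material difference from the paper's intended proof.
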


This corollary hence states that the semiclassical measure defined by the full density subsequence $\{u_{j_k}\}$ in $T^*\M$ coincides with the Liouville measure $\mu_1$ on $S^*\M$. It, however, does not exclude the possibility that there might exist a sparse subsequence of $\{u_j\}$ for which \eqref{eq:QUE} is not valid. In fact, Hassell \cite{Ha} provides an example that supports such a subsequence, that is, the generic Bunimovich stadia. This also verifies that classical ergodicity of $G_t$ can not imply QUE.

Let $\Omega\subset\M$ be a Borel subset with measure-zero boundary. Then by Portmanteau theorem (c.f. \cite[Theorem 6.2.5]{So1}), we have
\begin{equation}\label{eq:equidinM}
\int_\Omega|u_{j_k}|^2\,d\Vol\to\frac{\Vol(\Omega)}{\Vol(\M)}\quad\text{as }k\to\infty.
\end{equation}
Therefore, for this quantum ergodic subsequence of eigenfunctions, the $L^2$ mass of $\{u_{j_k}\}$ display equidistribution in $\M$ as $k\to\infty$. (Theorem \ref{thm:QE} actually concludes asymptotic equidistribution on $S^*\M$.) Here, the set $\Omega$ is fixed and we say that it is of scale $O(1)$. In the classical-quantum correspondence, an eigenfunction $u_j$ represents a stable state of a freely moving particle in $\M$, and the $L^2$ mass distribution of $u_j$ is interpreted as the probability density of finding a particle in $\M$. Therefore, \eqref{eq:equidinM} indicates that for almost all stable states, the probability density tends to the normalized Riemannian volume $d\Vol/\Vol(\M)$.

The $L^2$ distribution characterization of eigenfunctions has a lot of applications beyond its connection with quantum physics. In some applications, one is interested in the mass distribution in regions of smaller scales. For example, let $B(x,r)$ is the geodesic ball centered at $x\in\M$ with radius $r$. Consider the eigenfunction $u_j$ in $B(x,\lambda_j^{-1})$. One can rescale $B(x,\lambda_j^{-1})$ to the ball with radius $1$. Then the operator $\Delta-\lambda_j^2$ is a small perturbation of the Euclidean Laplacian in the unit ball. Hence, the classical elliptic PDE theory applies, e.g. the mean value inequality (c.f. Li and Schoen \cite{LS}):
$$\|u_j\|_{L^\infty(B(x,\lambda_j/2)}^2\le C\lambda_j^{-n}\int_{B(x,\lambda_j)}|u_j|^2\,d\Vol.$$ 
So the $L^2$ mass of $u_j$ in $B(x,\lambda_j)$ controls its supreme value in the half ball. This approach has been used in the nodal set estimates of eigenfunctions, see e.g. Donnelly and Fefferman \cite{DF}, Colding and Minicozzi \cite{CM}. However, in such a scale $\lambda_j^{-1}$ there is no fast oscillation, therefore generally no results on mass equidistribution can be expected. Hence, we ask the following question.

\begin{question}[Small scale mass equidistribution]\label{q:deltaequidinM}
Let $\rho\in(0,1)$. Given any ONBE, does there exist a full density subsequence $\{u_{j_k}\}$ such that
\begin{equation}\label{eq:rhoequidinM}
\int_{B(x,r_{j_k})}|u_{j_k}|^2\,d\Vol=\frac{\Vol(B(x,r_{j_k}))}{\Vol(\M)}+o(r_{j_k}^n)\quad\text{as }k\to\infty
\end{equation}
for $r_{j_k}=r(\lambda_{j_k})=\lambda_{j_k}^{-\rho}$ and all $x\in\M$?
\end{question} 

This statement, if true, asserts that the full density subsequence $\{u_{j_k}\}$ is $L^2$ mass equidistributed in the scale $\lambda_{j_k}^{-\rho}$. For example, on the torus $\mathbb T^n$, the whole ONBE $\{e^{in\cdot x}\}$ is equidistributed in any small scale; for small scale mass equidistribution of an arbitrary ONBE on $\mathbb T^n$, see Hezari and Rivi\`ere \cite{HR2} and the reference therein.

In negatively curved manifolds, we prove small scale mass equidistribution when the scale $r(\lambda_{j_k})=(\log\lambda_{j_k})^{-\alpha}$ for some $\alpha>0$: Given any ONBE, for a fixed point $x_0\in\M$, we show that \eqref{eq:rhoequidinM} is valid for a full density subsequence (depending on $x_0$) in the scale $(\log\lambda_{j_k})^{-\alpha}$ for $\alpha<1/(2n)$ (see Corollary \ref{cor:deltaequidatx}); for all the points on $\M$, we show that the two sides of \eqref{eq:rhoequidinM} are uniformly comparable for a full density subsequence in the scale $(\log\lambda_{j_k})^{-\alpha}$ for $\alpha<1/(3n)$ (see Corollary \ref{cor:deltaequidinM}).

\begin{rmk}\hfill
\begin{enumerate}
\item In negatively curved manifolds at a given point $x$, \eqref{eq:rhoequidinM} is automatically true if $\rho=0$ by \eqref{eq:equidinM}. Here, we assume that the injectivity radius of $\M$ is much greater than $1$ (and throughout the whole paper).
\item We focus on the negatively curved manifolds since more results on the dynamical properties of the geodesic flows are available in this case, particularly, Liverani \cite{Liv}'s exponential decay of correlation. It is doubtful that \eqref{eq:equidinM} can be improved to smaller scales in a manifold with only ergodicity assumption on the geodesic flow.
\item It is a natural question to ask whether the convergence in \eqref{eq:rhoequidinM} is uniform for $x\in\M$. Such uniform convergence is crucial for its application, because one can use equidistribution to study the eigenfunctions locally in each small region and then combine them to a global estimate in the whole manifold by uniformity. 
\end{enumerate}
\end{rmk}

It is convenient to work in the semiclassical setting. In this setting, the quantum ergodicity theorem is proved by Helffer, Martinez, and Robert \cite{HMR}. Here, we state the result from \cite[Theorem 5]{DyGu}.
 
\begin{thm}[Quantum ergodicity, a semiclassical version]\label{thm:scQE}
Assume that $G_t$ is ergodic on $S^*\M$ with respect to the Liouville measure $\mu_1$. Let $\{u_j\}_{j=0}^\infty$ be an ONBE of the quantized Laplacian, $h^2\Delta$, that is, $h^2\Delta u_j=E_ju_j$. Then
\begin{equation}\label{eq:scQE}
V_1(h,a)=h^{n-1}\sum_{E_j\in[1,1+h]}\Big|\langle\Op_h(a)u_j,u_j\big\rangle-\mu_1(a)\Big|=o(1)\quad\text{as }h\to0,
\end{equation}
where $\Op_h(a)$ is the semiclassical pseudodifferential operator with principal symbol $a\in S(\M)$ (see its definition in Section \ref{sec:SA}).
\end{thm}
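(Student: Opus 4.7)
The plan is to follow the classical Schnirelman--Zelditch--Colin de Verdi\`ere strategy, adapted to the semiclassical framework. First, I would pass from the $\ell^1$ sum to an $\ell^2$ sum: by Cauchy--Schwarz,
$$V_1(h,a)^2\le h^{n-1}\#\{j:E_j\in[1,1+h]\}\cdot V_2(h,a),$$
where $V_2(h,a)$ is the analogous second moment with $|\cdots|^2$ in place of $|\cdots|$. The semiclassical Weyl law yields $h^{n-1}\#\{j:E_j\in[1,1+h]\}=O(1)$, so it suffices to prove $V_2(h,a)=o(1)$. Replacing $a$ by $a-\mu_1(a)\chi$ for a smooth cutoff $\chi\equiv1$ near $S^*\M$, one may assume without loss of generality that $\mu_1(a)=0$.

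The heart of the proof is to trade $\Op_h(a)$ for a time-averaged version. Fix $T>0$ and let $\langle a\rangle_T=T^{-1}\int_0^T a\circ G_t\,dt$. Egorov's theorem provides, for each fixed $t$, an identity of the form $U_t^*\Op_h(a)U_t=\Op_h(a\circ G_t)+O_t(h)$, where $U_t$ is the quantum propagator generated by the semiclassical Laplacian. Since each $u_j$ is an exact eigenfunction, $U_t$ acts diagonally on it, so integrating in $t$ gives
$$\langle\Op_h(\langle a\rangle_T)u_j,u_j\rangle=\langle\Op_h(a)u_j,u_j\rangle+O_T(h)$$
uniformly for $E_j\in[1,1+h]$. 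By the quantum Cauchy--Schwarz inequality and the composition formula $\Op_h(b)^*\Op_h(b)=\Op_h(|b|^2)+O_b(h)$,
$$|\langle\Op_h(\langle a\rangle_T)u_j,u_j\rangle|^2\le\langle\Op_h(|\langle a\rangle_T|^2)u_j,u_j\rangle+O_T(h).$$
Summing over $E_j\in[1,1+h]$ and applying the semiclassical local Weyl law to the right-hand side yields
$$\limsup_{h\to0}V_2(h,a)\le\int_{S^*\M}|\langle a\rangle_T|^2\,d\mu_1.$$

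Finally, ergodicity enters through the von Neumann mean ergodic theorem: since $G_t$ is ergodic with respect to $\mu_1$ and $\mu_1(a)=0$, the time averages $\langle a\rangle_T$ tend to $0$ in $L^2(S^*\M,\mu_1)$ as $T\to\infty$. Letting $T\to\infty$ in the bound above forces $V_2(h,a)=o(1)$, and hence $V_1(h,a)=o(1)$.

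The main technical obstacle is the order of the two limits. The Egorov error $O_T(h)$ degrades as $T$ grows (for Anosov flows, exponentially in $T$), so one cannot take $T\to\infty$ and $h\to0$ simultaneously; the argument must first fix $T$, let $h\to0$, and only then send $T\to\infty$, which is precisely why the $\limsup$ formulation is essential. A secondary point is verifying that the various pseudodifferential errors are uniform across the narrow spectral window $E_j\in[1,1+h]$, which is routine because each $u_j$ is an exact eigenfunction of $h^2\Delta$.
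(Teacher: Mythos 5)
Your proof is correct and is the standard Schnirelman--Zelditch--Colin de Verdi\`ere argument in the semiclassical framework. Note that the paper does not prove Theorem~\ref{thm:scQE} itself but cites it from Dyatlov--Guillarmou; the same structure you describe (Cauchy--Schwarz from $V_1$ to $V_2$, reduction to $\mu_1(a)=0$, time-averaging via Egorov, quantum Cauchy--Schwarz and composition, the local Weyl law/trace formula, the von Neumann mean ergodic theorem, and crucially taking $h\to0$ before $T\to\infty$) is exactly what underlies the paper's proof of the refined Theorem~\ref{thm:deltaQE}, which sharpens each step quantitatively using Liverani's exponential decay of correlations in place of the qualitative mean ergodic theorem and Egorov's theorem up to the Ehrenfest time $T_E\sim|\log h|$ in place of Egorov at fixed $T$, precisely so that the two limits can be coupled as $T=\epsilon T_E(h)$ rather than taken iteratively.
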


One can choose $\lambda_j^2=h^{-2}E_j$ to recover the high-energy version quantum ergodicity in Theorem \ref{thm:QE} and Corollary \ref{cor:QE}. With this connection between the two versions of quantum ergodicity, we see that Question \ref{q:deltaequidinM} actually involves choosing $a=\chi_{B(x_0,r)}$ in \eqref{eq:scQE} with $r=h^\rho$. But the smoothened version of such a function $a$ would belong in nice symbol class only if $\rho\in[0,1/2)$, that is, $S_{h^\rho}(\M)$ (see Section \ref{sec:SA}). So answering Question \ref{q:deltaequidinM} for $\rho\ge1/2$ would be very challenging. (It is in fact already challenging for any polynomial scale $h^\rho$ for $\rho>0$ as our results are all in the logarithmic scales.) 

In negatively curved manifolds, we in fact are able to prove quantum ergodicity for larger classes of symbols than $S(\M)$. To this end, we have the following definition.

\begin{defn}[$\rho$-admissibility]\label{defn:rhoad}
Let $\rho\in[0,1/2)$ and $0<h_0\ll1$. We say that $\delta(h)\ge0$ is $\rho$-admissible if $\delta(h)$ satisfies that $h^\rho\le\delta(h)\le1$ for all $h\in(0,h_0]$.
\end{defn}

\begin{ex}
$\delta(h)=|\log h|^{-\alpha}$ for $\alpha>0$ is $\rho$-admissible for all $\rho\in(0,1/2)$. 
\end{ex}

We then define the symbol class $S_{\delta(h)}(\M)$ as follows.

\begin{defn}[$S_{\delta(h)}(\M)$ symbol classes]
Let $\delta(h)$ be $\rho$-admissible for some $\rho\in[0,1/2)$. We say that $a\in C^\infty(T^*\M)$ belongs to $S_{\delta(h)}(\M)$ if and only if for each multi-indices $\alpha,\beta$, there exists a uniform constant $C_{\alpha,\beta}$ independent of $h$ such that
$$\sup_{x\in\M,\xi\in T_x^*\M}|\partial^\alpha_x\partial^\beta_\xi a|\le C_{\alpha,\beta}\,\delta(h)^{-|\alpha|-|\beta|}\langle\xi\rangle^{-|\beta|},$$
where $\langle\xi\rangle=(1+|\xi|_x^2)^{1/2}$.
\end{defn}

Our main theorem, small scale quantum ergodicity theorem, states that Theorem \ref{thm:scQE} can be extended to symbols in $S_{\delta(h)}(\M)$, given that $\delta(h)\to0$ in a certain logarithmical rate as $h\to0$.
\begin{thm}[Small scale quantum ergodicity]\label{thm:deltaQE}
Let $(\M,g)$ be negatively curved. For
$$\alpha>0\quad\text{and}\quad0\le\beta<1,\quad\text{or}\quad\alpha=0\quad\text{and}\quad\beta=1,$$ 
suppose that $\delta(h)=|\log h|^{-\alpha}$. Then for any ONBE $\{u_j\}_{j=0}^\infty$ and all $a\in S_{\delta(h)}(\M)$,
\begin{equation}\label{eq:deltaQE}
V_2(h,a)=h^{n-1}\sum_{E_j\in[1,1+h]}\Big|\big\langle\Op_h(a)u_j,u_j\big\rangle-\mu_1(a)\Big|^2=O\left(|\log h|^{-\beta}\right)\quad\text{as }h\to0.
\end{equation}
\end{thm}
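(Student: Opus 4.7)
\emph{Overview.} I would adapt the standard variance proof of semiclassical quantum ergodicity to the symbol class $S_{\delta(h)}(\M)$ with $\delta(h)=|\log h|^{-\alpha}$, replacing the crude Birkhoff-averaging step by a quantitative mixing estimate coming from Liverani's exponential decay of correlations for Anosov flows. The argument has four ingredients: time-averaging via a long-time Egorov theorem, reduction to an $L^2$ quantity by the local Weyl law, estimation of the averaged classical variance by mixing, and a parameter balance.

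\emph{Egorov and Hilbert--Schmidt reduction.} Given $a\in S_{\delta(h)}(\M)$, set $\langle a\rangle_T=\frac{1}{T}\int_0^T a\circ G_t\,dt$, for which $\mu_1(\langle a\rangle_T)=\mu_1(a)$ by flow-invariance. Because $U_h(t)u_j=e^{-itE_j/h}u_j$, we have $\langle U_h(-t)\Op_h(a)U_h(t)u_j,u_j\rangle=\langle\Op_h(a)u_j,u_j\rangle$, and Egorov's theorem on negatively curved $(\M,g)$ gives
$$\langle\Op_h(a)u_j,u_j\rangle-\mu_1(a)=\langle\Op_h(\langle a\rangle_T-\mu_1(a))u_j,u_j\rangle+O(E(T,h))$$
uniformly in $j$, up to an Ehrenfest time $T\le c_0|\log h|$, with error $E(T,h)$ controlled by derivatives of $a\circ G_t$ (only polylogarithmic in $1/h$ since $\delta(h)^{-1}=|\log h|^\alpha$). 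Squaring, summing over $E_j\in[1,1+h]$, and using Cauchy--Schwarz,
$$V_2(h,a)\le 2V_2(h,\langle a\rangle_T-\mu_1(a))+O(E(T,h)^2).$$
Writing $b=\langle a\rangle_T-\mu_1(a)$ and using $|\langle\Op_h(b)u_j,u_j\rangle|^2\le\langle\Op_h(b)^*\Op_h(b)u_j,u_j\rangle$, the local Weyl law applied to $\Op_h(b)^*\Op_h(b)$ yields $V_2(h,b)\le C\int_{S^*\M}|b|^2\,d\mu_1+o(1)$.

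\emph{Mixing.} I would then invoke Liverani's exponential decay of correlation for $G_t$: there exist $\gamma>0$ and $r_0>0$ such that
$$\Bigl|\int_{S^*\M}\phi\cdot(\phi\circ G_u)\,d\mu_1-\mu_1(\phi)^2\Bigr|\le C\|\phi\|_{C^{r_0}}^2\,e^{-\gamma u}$$
for all $\phi\in C^{r_0}(S^*\M)$. Expanding $\int|\langle a\rangle_T-\mu_1(a)|^2 d\mu_1$ as a double integral over $[0,T]^2$ and using $\mu_1$-invariance to reduce to a single integral in $u=t-s$, the correlation bound yields
$$\int_{S^*\M}|\langle a\rangle_T-\mu_1(a)|^2\,d\mu_1\le\frac{C\|a\|_{C^{r_0}}^2}{\gamma T}\le\frac{C|\log h|^{2r_0\alpha}}{T}.$$

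\emph{Balance and main obstacle.} Taking $T=T_0|\log h|$ with $T_0$ small compared to the Ehrenfest constant gives $V_2(h,a)\le C|\log h|^{2r_0\alpha-1}+O(E(T,h)^2)$. The case $\alpha=0,\beta=1$ reduces to the sharp logarithmic rate of Zelditch \cite{Ze2}; for $\alpha>0$ and $\beta<1$ I would choose the Hölder exponent in Liverani's estimate to satisfy $r_0<(1-\beta)/(2\alpha)$, which is always permissible since the decay of correlation holds for every positive Hölder regularity. The main technical obstacle is justifying Egorov on the class $S_{\delta(h)}(\M)$ up to the Ehrenfest time: each order of the Egorov asymptotic produces factors of $h\,\delta(h)^{-2}e^{2\Lambda t}$ (with $\Lambda$ the maximal expansion rate of $G_t$), so one needs a pseudodifferential calculus on $S_{\delta(h)}(\M)$ with careful bookkeeping of these factors, together with an optimisation of the expansion order $N$. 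This is feasible precisely because $\delta(h)^{-1}=|\log h|^\alpha$ grows only polylogarithmically in $1/h$, so the exponential-in-$T$ amplification is defeated by any small positive power of $h$ as long as $T_0$ is chosen below the Anosov threshold $1/(2\Lambda)$.
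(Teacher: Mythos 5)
Your proposal follows essentially the same route as the paper: time-average via Egorov up to the Ehrenfest time $T\sim\epsilon|\log h|$, reduce the variance sum to an $L^2(\mu_1)$ quantity via the semiclassical trace formula (Schubert's Proposition), bound $\|\Av_T(a)-\mu_1(a)\|_{L^2(S^*\M)}$ by Liverani's exponential decay of correlations together with the H\"older estimate $\|a\|_\gamma\le C\delta(h)^{-\gamma}$, and then balance parameters by choosing the free H\"older exponent $\gamma=r_0<(1-\beta)/(2\alpha)$. The only step you leave out is the paper's Step~2, the reduction from a general $a\in S_{\delta(h)}(\M)$ to one that is compactly supported in an energy shell with $\mu_1(a)=0$, which is handled via a frequency cutoff $\phi(|\xi|_x)$ and ellipticity of $h^2\Delta-E_j$ away from $S^*\M$; this is a routine microlocal reduction and does not affect the substance of the argument.
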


\begin{rmk}
Notice that when $\alpha=0$, we have $\delta(h)=1$ and $S_{\delta(h)}(\M)=S(\M)$. Then Theorem \ref{thm:deltaQE} recovers \eqref{eq:Zelditch} from \cite{Ze2}.
\end{rmk}

We next consider some special symbols in $S_{\delta(h)}(\M)$ that are used to derive small scale mass equidistribution. 

\begin{defn}[$\delta$-microlocalized symbols]
Let $\delta(h)$ be $\rho$-admissible for some $\rho\in[0,1/2)$. Let $b\in C_0^\infty(\R^n\times\R^{n-1})$. In a local chart that contains $z_0=(x_0,\xi_0)\in S^*\M$, write the local coordinate of $z=(x,\xi)\in T^*\M$ as $(\tilde x,\tilde\xi)\in\R^{2n}$. We say that a compactly supported smooth function $a^b_{z_0}(x,\xi;h)$ is a $\delta$-microlocalized symbol if it is locally defined by
\begin{equation}\label{eq:deltasymbol}
a^b_{z_0}(x,\xi;h)=b\left(\frac{\tilde x-\tilde x_0}{\delta(h)},\frac{\widehat{\tilde\xi}-\tilde\xi_0}{\delta(h)}\right)\phi\left(|\tilde\xi|_x\right),
\end{equation}
where $\hat\eta=\eta/|\eta|_x$, and $\phi\in C^\infty_0\big((-1/2,1/2)\big)$ has small compact support and equals $1$ on $(-1/4,1/4)$.
\end{defn}

\begin{rmk}
A $\delta$-microlocalized symbol is in the symbol class $S_{\delta(h)}(\M)$. Since the base function $b$ is in $C_0^\infty(\R^n\times\R^{n-1})$, the support of $a^b_{z_0}$ restricted on $S^*\M$ shrinks to $z_0\in S^*\M$ at the same rate $\delta(h)$ in every direction; the cutoff function $\phi$ ensures that $a^b_{z_0}$ is extended to a well-defined symbol in a neighborhood of $S^*\M$ in $T^*\M$. We are free to choose the base function $b$ in building $a_{z_0}^b(x,\xi;h)$.
\end{rmk}

Considering $\delta$-microlocalized symbols, small scale quantum ergodicity theorem states
\begin{thm}\label{thm:deltaQEdeltamicrosymbol}
Let $(\M,g)$ be negatively curved. For
$$0<\alpha<\frac{1}{2(2n-1)}\quad\text{and}\quad0\le\beta<1-2\alpha(2n-1),\quad\text{or}\quad\alpha=0\quad\text{and}\quad\beta=1,$$ 
suppose that $\delta(h)=|\log h|^{-\alpha}$. Then for any ONBE $\{u_j\}_{j=0}^\infty$, as $h\to0$,
\begin{equation}\label{eq:deltaQEdeltamicrosymbol}
V_2\left(h,a^b_{z_0}\right)=h^{n-1}\sum_{E_j\in[1,1+h]}\Big|\big\langle\Op_h\left(a^b_{z_0}\right)u_j,u_j\big\rangle-\mu_1\big(a^b_{z_0}\big)\Big|^2=C\,\delta(h)^{2(2n-1)}|\log h|^{-\beta},
\end{equation}
uniformly for $z_0\in S^*\M$, where $a^b_{z_0}$ is defined in \eqref{eq:deltasymbol} and $C$ depends on $b$, $\phi$, and $\M$.
\end{thm}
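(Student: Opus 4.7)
The plan is to deduce Theorem \ref{thm:deltaQEdeltamicrosymbol} directly from Theorem \ref{thm:deltaQE} by treating $a^b_{z_0}$ as a symbol in $S_{\delta(h)}(\M)$ with seminorm constants uniform in $z_0\in S^*\M$, and then rewriting the resulting decay $|\log h|^{-\tilde\beta}$ in the form $\delta(h)^{2(2n-1)}|\log h|^{-\beta}$.

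First I would verify that $a^b_{z_0}\in S_{\delta(h)}(\M)$ with seminorm constants independent of $z_0$ and $h$. Differentiating \eqref{eq:deltasymbol}, each derivative falling on the rescaled factor $b\big((\tilde x-\tilde x_0)/\delta,(\widehat{\tilde\xi}-\tilde\xi_0)/\delta\big)$ contributes $\delta(h)^{-1}$, while derivatives of $\phi(|\tilde\xi|_x)$ stay bounded, and the weight $\langle\xi\rangle^{-|\beta'|}$ is automatic from the compact $\xi$-support. Hence $|\partial_x^\alpha\partial_\xi^{\beta'}a^b_{z_0}|\le C_{\alpha,\beta'}(b,\phi,\M)\,\delta(h)^{-|\alpha|-|\beta'|}$, with constants depending on finitely many derivatives of $b$ and $\phi$ and on a local coordinate atlas of $\M$; since translating the center of $b$ does not affect derivative bounds, these constants are uniform in $z_0$.

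Next I would set $\tilde\beta:=\beta+2\alpha(2n-1)$. The hypotheses $0<\alpha<1/(2(2n-1))$ and $0\le\beta<1-2\alpha(2n-1)$ are equivalent to $\tilde\beta\in[2\alpha(2n-1),1)\subset[0,1)$, which lies in the admissible range of Theorem \ref{thm:deltaQE}. Applying that theorem to $a^b_{z_0}$ with exponent $\tilde\beta$, and using the uniform seminorm bounds from the first step, gives
\begin{equation*}
V_2\left(h,a^b_{z_0}\right)\le C(b,\phi,\M,\beta)\,|\log h|^{-\tilde\beta},
\end{equation*}
uniformly in $z_0\in S^*\M$. Since $\delta(h)=|\log h|^{-\alpha}$ yields $\delta(h)^{2(2n-1)}=|\log h|^{-2\alpha(2n-1)}$, this rewrites as $C\,\delta(h)^{2(2n-1)}|\log h|^{-\beta}$, matching \eqref{eq:deltaQEdeltamicrosymbol}. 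The borderline case $\alpha=0,\beta=1$ reduces directly to \eqref{eq:Zelditch}.

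The main point requiring care is that the constant in Theorem \ref{thm:deltaQE} should depend on the symbol only through finitely many seminorms of $S_{\delta(h)}(\M)$, so that the uniformity in $z_0$ from the first step transfers to uniformity of the $V_2$-bound. This should follow from the ingredients of the proof of Theorem \ref{thm:deltaQE}---Egorov's theorem, the semiclassical symbol calculus, and Liverani's exponential decay of correlations---each of whose error constants depends only on a fixed number of derivatives of the observables. The threshold $\alpha<1/(2(2n-1))$ is sharp for this algebraic approach: it is precisely the condition under which $\tilde\beta<1$ still leaves a nonnegative $\beta=\tilde\beta-2\alpha(2n-1)$ after pulling out the size factor $\delta(h)^{2(2n-1)}$.
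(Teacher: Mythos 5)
Your proof is correct and follows essentially the same route as the paper. The paper's proof likewise records the uniform H\"older bound $\|a^b_{z_0}\|_\gamma\le C\delta(h)^{-\gamma}$ and then reuses the intermediate estimate $V_2\le C\delta(h)^{-2\gamma}/|\log h|$ from the proof of Theorem \ref{thm:deltaQE}, choosing $\gamma$ so that $\beta\le1-2\alpha(2n+\gamma-1)$; your substitution $\tilde\beta=\beta+2\alpha(2n-1)$ is the same algebra packaged differently, and your observation that the constant depends only on finitely many seminorms (hence is uniform in $z_0$) is exactly the point the paper exploits by citing \eqref{eq:abz0Holder} uniformly.
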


\begin{rmk}
We have the term $\delta(h)^{2(2n-1)}$ in the right-hand side of \eqref{eq:deltaQEdeltamicrosymbol} because $\mu_1\big(a^b_{z_0}\big)=O\left(\delta(h)^{2n-1}\right)$ in the left-hand side.
\end{rmk}

To consider the mass equidistribution of eigenfunctions in small scales of $\M$ (instead of on $S^*\M$), we need the $\delta$-localized symbols.

\begin{defn}[$\delta$-localized symbols]
Let $\delta(h)$ be $\rho$-admissible for some $\rho\in[0,1/2)$. Let $b\in C_0^\infty(\R^n)$. In a local chart that contains $z_0=(x_0,\xi_0)\in S^*\M$, write the local coordinate of $z=(x,\xi)\in T^*\M$ as $(\tilde x,\tilde\xi)\in\R^{2n}$. We say that a compactly supported smooth function $a^b_{x_0}(x,\xi;h)$ is a $\delta$-localized symbol if it is locally defined by
\begin{equation}\label{eq:deltaMsymbol}
a^b_{x_0}(x,\xi;h)=b\left(\frac{\tilde x-\tilde x_0}{\delta(h)}\right)\phi\left(|\tilde\xi|_x\right).
\end{equation}
\end{defn}

Similar to Theorem \ref{thm:deltaQEdeltamicrosymbol}, we have the following theorem, with the shrinking rate of the symbols slightly better than the one in Theorem \ref{thm:deltaQE}.

\begin{thm}\label{thm:deltaQEinM}
Let $(\M,g)$ be negatively curved. For 
$$0<\alpha<\frac{1}{2n}\quad\text{ and }\quad0\le\beta<1-2\alpha n,\quad\text{or}\quad\alpha=0\quad\text{and}\quad\beta=1,$$ 
suppose that $\delta(h)=|\log h|^{-\alpha}$. Then for any ONBE $\{u_j\}_{j=0}^\infty$, as $h\to0$,
\begin{equation}\label{eq:deltaQEinM}
V_2\big(h,a^b_{x_0}\big)=h^{n-1}\sum_{E_j\in[1,1+h]}\Big|\big\langle\Op_h\left(a^b_{x_0}\right)u_j,u_j\big\rangle-\mu_1\big(a^b_{x_0}\big)\Big|^2=C\,\delta(h)^{2n}|\log h|^{-\beta},
\end{equation}
uniformly for $x_0\in S^*\M$, where $a^b_{x_0}$ is defined in \eqref{eq:deltaMsymbol} and $C$ depends on $b$, $\phi$, and $\M$.
\end{thm}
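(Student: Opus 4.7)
The proof will closely parallel that of Theorem \ref{thm:deltaQEdeltamicrosymbol}. What changes is that a $\delta$-localized symbol $a^b_{x_0}$ is independent of the cosphere-fibre direction $\hat\xi$ above $x_0$, so its restriction to $S^*\M$ is essentially a function of $x$ alone with support of $\mu_1$-measure $O(\delta^n)$ and $\mu_1(a^b_{x_0})=O(\delta^n)$, rather than $O(\delta^{2n-1})$ as in the microlocalized case. The leading factor $\delta^{2n}$ in \eqref{eq:deltaQEinM} is exactly $\mu_1(a^b_{x_0})^2$, so the target is to show that the relative quantum variance decays like $|\log h|^{-\beta}$ for any $\beta<1-2\alpha n$.

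The first step is a reduction to the squared $L^2$-norm of a time average. Setting $\tilde a=a^b_{x_0}-\mu_1(a^b_{x_0})$ and fixing $T=T(h)=\beta''|\log h|$ with $\beta''$ small enough relative to the maximal expansion rate of $G_t$, Egorov's theorem in the class $S_{\delta(h)}(\M)$ gives
\[
\bigl\langle\Op_h(\tilde a)u_j,u_j\bigr\rangle
=\bigl\langle\Op_h(\langle\tilde a\rangle_T)u_j,u_j\bigr\rangle+O(h^\kappa),
\qquad
\langle\tilde a\rangle_T=\frac{1}{T}\int_0^T\tilde a\circ G_t\,dt,
\]
for $E_j\in[1,1+h]$ and some $\kappa>0$. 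Applying the local Weyl law to the right-hand side yields
\[
V_2(h,a^b_{x_0})\le C\,\bigl\|\langle\tilde a\rangle_T\bigr\|_{L^2(S^*\M,\,\mu_1)}^2+O(h^\kappa).
\]

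The bulk of the proof is estimating $\|\langle\tilde a\rangle_T\|_{L^2(\mu_1)}^2$. Expanding the square and using $G_t$-invariance of $\mu_1$ rewrites it as a time-integrated correlation
\[
\bigl\|\langle\tilde a\rangle_T\bigr\|_{L^2(\mu_1)}^2
=\frac{2}{T^2}\int_0^T(T-u)\,\rho(u)\,du,
\qquad
\rho(u)=\int_{S^*\M}\tilde a\,(\tilde a\circ G_u)\,d\mu_1.
\]
The plan is to combine the trivial bound $|\rho(u)|\le\|\tilde a\|_{L^2(\mu_1)}^2=O(\delta^n)$ with Liverani's exponential decay of correlations $|\rho(u)|\le C\,\|\tilde a\|_*^2\,e^{-\lambda u}$ in a suitable smoothness norm (on which $\|\tilde a\|_*=O(\delta^{-k_*})$), and to split the time integral at the balancing point $u^*\sim|\log\delta|$. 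The extra factor of $\delta^n$ needed to reach $\delta^{2n}$ comes from the fact that $\tilde a$ is mean zero and supported in a set of $\mu_1$-measure $O(\delta^n)$, which allows the short-time contribution to be estimated below the crude $L^2$ bound by the same dynamical mechanism that produces the $\mu_1(a)^2$ factor in the Remark after Theorem \ref{thm:deltaQEdeltamicrosymbol}.

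The principal obstacle is calibrating this splitting. The Liverani estimate carries the very large prefactor $\|\tilde a\|_*^2=O(\delta^{-2k_*})$ from the $\delta$-scale oscillation of $\tilde a$, while the trivial estimate has no decay in $u$; one must arrange that the Egorov remainder, the short-time $(u<u^*)$ contribution and the long-time $(u>u^*)$ contribution simultaneously fit within $C\,\delta^{2n}|\log h|^{-\beta}$, and the threshold $\alpha<1/(2n)$ is exactly what makes this calibration possible with $T\asymp|\log h|$. A naive Minkowski decomposition of $a^b_{x_0}$ into $\delta^{-(n-1)}$ $\delta$-microlocalized pieces followed by Theorem \ref{thm:deltaQEdeltamicrosymbol} recovers the $\delta^{2n}$ prefactor but only in the narrower range $\beta<1-2\alpha(2n-1)$, so the direct analysis outlined above is required to reach the wider range $\beta<1-2\alpha n$ asserted in Theorem \ref{thm:deltaQEinM}. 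Uniformity in $x_0\in\M$ follows since every constant depends only on $b$, $\phi$, and dynamical/geometric data of $\M$.
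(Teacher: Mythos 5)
Your time-splitting argument is a genuinely different route and, carried out properly, would actually give a stronger conclusion than the theorem asserts; but the surrounding heuristics misread both the paper and your own argument. The paper's proof is a two-line corollary of the proof of Theorem \ref{thm:deltaQE}: it does not use the small support or the mean-zero structure of $a^b_{x_0}$ at all. It only records that $\|a^b_{x_0}\|_\gamma\le C\delta(h)^{-\gamma}$ for every fixed $\gamma\in(0,1)$, applies Theorem \ref{thm:roe} to get $V_2\le C\delta(h)^{-2\gamma}/|\log h|$, and then observes that with $\delta(h)=|\log h|^{-\alpha}$ the right side equals $|\log h|^{2\alpha\gamma-1}$, which can be rewritten as $\delta(h)^{2n}|\log h|^{-\beta}$ precisely when $\beta=1-2\alpha(n+\gamma)$; letting $\gamma\to0$ sweeps out the full range $\beta<1-2\alpha n$. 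The prefactor $\delta(h)^{2n}$ in \eqref{eq:deltaQEinM} is thus a reparametrization of a $\gamma$-flexible H\"older bound, not a consequence of support size or of any dynamical mechanism, and your assertion that the direct time-splitting analysis is ``required to reach the wider range $\beta<1-2\alpha n$'' is incorrect.

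Your splitting is nevertheless a valid and sharper alternative --- but you do not carry out the calibration, and the one you announce is wrong. Using the trivial bound $|\rho(u)|\le\|\tilde a\|^2_{L^2(\mu_1)}=O(\delta(h)^n)$ for $u\le u^*$, the Liverani bound $|\rho(u)|\le C e^{-cu}\|\tilde a\|^2_\gamma$ with $\gamma$ held fixed (so $c=c(\gamma)$ is a fixed constant) for $u>u^*$, and the choice $u^*$ a fixed multiple of $|\log\delta(h)|$, one finds
$$\bigl\|\langle\tilde a\rangle_T\bigr\|^2_{L^2(\mu_1)}=O\!\left(\frac{|\log\delta(h)|\,\delta(h)^n}{T}\right)=O\!\left(\frac{\log|\log h|}{|\log h|^{1+\alpha n}}\right),$$
which fits inside $\delta(h)^{2n}|\log h|^{-\beta}$ for every $\beta<1-\alpha n$ --- a strictly larger range than $\beta<1-2\alpha n$. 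In particular the threshold $\alpha<1/(2n)$ plays no role in closing the split; it is merely what guarantees $1-2\alpha n>0$ so the stated $\beta$-range is nonempty. Also, the short-time contribution is estimated by exactly the crude $L^2$ bound, not ``below'' it, and the Remark following Theorem \ref{thm:deltaQEdeltamicrosymbol} is an elementary volume count of $\mu_1(a^b_{z_0})$ rather than a dynamical input. Finally, the Minkowski decomposition you describe does give the narrower range $\beta<1-2\alpha(2n-1)$ as you say, but this only shows that particular reduction is lossy; it does not show the time-splitting is needed.
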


\begin{rmk}
In a recent preprint \cite{HR1}, Hezari and Rivi\`ere proved Theorem \ref{thm:deltaQEinM} for $V_p\big(h,a^b_{x_0}\big)$ of $p\ge2$ with the same $\delta(h)$ when $b$ is a cutoff function. They then used it to study $L^p$ norm and nodal set estimates of the full density quantum ergodic subsequence of eigenfunctions. We refer to their paper for details; see also Sogge \cite{So2}.
\end{rmk}

Using Theorem \ref{thm:deltaQEinM}, we answer Question \ref{q:deltaequidinM} for any fixed point in the manifold.
\begin{cor}\label{cor:deltaequidatx}
Let $(\M,g)$ be negatively curved and $x_0\in\M$. 
Assume that
$$0\le\alpha<\frac{1}{2n}\quad\text{and}\quad r(\lambda)=(\log\lambda)^{-\alpha}.$$ 
Given any ONBE $\{u_j\}_{j=0}^\infty$ with $\Delta u_j=\lambda_j^2u_j$, there exists a full density subsequence $\{u_{j_k}\}$ such that
\begin{equation}\label{eq:deltaequidatx}
\int_{B(x_0,r_{j_k})}|u_{j_k}|^2\,d\Vol=\frac{\Vol(B(x_0,r_{j_k}))}{\Vol(\M)}+o(r_{j_k}^n)\quad\text{as }k\to\infty
\end{equation}
for $r_{j_k}=r(\lambda_{j_k})=(\log\lambda_{j_k})^{-\alpha}$.
\end{cor}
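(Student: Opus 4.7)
The plan is to deduce~\eqref{eq:deltaequidatx} from Theorem~\ref{thm:deltaQEinM} by approximating the indicator of the geodesic ball $B(x_0,r)$ by smooth $\delta$-localized symbols from above and below, and then running a Chebyshev-plus-full-density extraction. First I would pass to normal coordinates at $x_0$, in which $B(x_0,r)$ equals the Euclidean ball $\{|\tilde x-\tilde x_0|\le r\}$ modulo curvature corrections that are negligible at scale $r^n$. For each $m\in\N$ I would pick $b^\pm_m\in C_0^\infty(\R^n)$ satisfying $b^-_m\le\chi_{B(0,1)}\le b^+_m$ and $\|b^+_m-b^-_m\|_{L^1(\R^n)}\le 1/m$, and form the $\delta$-localized symbols $a^{b^\pm_m}_{x_0}$ as in~\eqref{eq:deltaMsymbol}. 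Since the fiber cutoff in~\eqref{eq:deltaMsymbol} is chosen so as to equal $1$ on the semiclassical energy shell selected by the spectral window, standard semiclassical calculus gives
$$\big\langle\Op_h(a^{b^\pm_m}_{x_0})u_j,u_j\big\rangle=\int_\M b^\pm_m\!\Big(\frac{\tilde x-\tilde x_0}{\delta(h)}\Big)|u_j|^2\,d\Vol+O(h),$$
together with $\mu_1(a^{b^\pm_m}_{x_0})=\Vol(\M)^{-1}\int_\M b^\pm_m((\tilde x-\tilde x_0)/\delta(h))\,d\Vol+O(\delta(h)^{n+1})$, for every $u_j$ with $E_j\in[1,1+h]$.

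\textbf{Chebyshev step.} Setting $h_j=\lambda_j^{-1}$, so that $r_j=\delta(h_j)=(\log\lambda_j)^{-\alpha}$, I would fix $\beta\in(0,1-2\alpha n)$, which is nonempty because $\alpha<1/(2n)$. Theorem~\ref{thm:deltaQEinM} yields $V_2(h,a^{b^\pm_m}_{x_0})=O(\delta(h)^{2n}|\log h|^{-\beta})$, so Chebyshev's inequality with threshold $t(h)=\eta(h)\,\delta(h)^n$ produces
$$\#\Big\{j:E_j\in[1,1+h],\ \big|\big\langle\Op_h(a^{b^\pm_m}_{x_0})u_j,u_j\big\rangle-\mu_1(a^{b^\pm_m}_{x_0})\big|\ge t(h)\Big\}\le\frac{C\,|\log h|^{-\beta}}{\eta(h)^2}\,h^{-(n-1)}.$$
Since Weyl's law gives a window count of order $h^{-(n-1)}$, the exceptional fraction is $|\log h|^{-\beta}/\eta(h)^2$, which tends to $0$ whenever $\eta(h)\gg|\log h|^{-\beta/2}$.

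\textbf{Diagonalization and conclusion.} I would next cover the spectrum by dyadic semiclassical windows $h=2^{-k}$ and run the standard diagonal extraction of \cite{Ze1,Ze2}: choose $\eta(h)=|\log h|^{-\beta/3}\to 0$ and a slowly growing $m(h)\to\infty$ with $1/m(h)\ll\eta(h)$. In each window the set of indices $j$ violating the Chebyshev deviation bound for either $b^+_{m(h)}$ or $b^-_{m(h)}$ is a $o(1)$ fraction, so complementing across all windows yields a full-density subsequence $\{u_{j_k}\}$. Combining with the first paragraph gives, along this subsequence,
$$\bigg|\int_\M b^\pm_{m(h_{j_k})}\!\Big(\frac{\tilde x-\tilde x_0}{r_{j_k}}\Big)|u_{j_k}|^2\,d\Vol-\frac{1}{\Vol(\M)}\int_\M b^\pm_{m(h_{j_k})}\!\Big(\frac{\tilde x-\tilde x_0}{r_{j_k}}\Big)\,d\Vol\bigg|=o(r_{j_k}^n).$$
Sandwiching via $b^-_m\le\chi_{B(0,1)}\le b^+_m$ and using $\|b^+_m-b^-_m\|_{L^1(\R^n)}\,r_{j_k}^n=O(r_{j_k}^n/m(h_{j_k}))=o(r_{j_k}^n)$ then delivers~\eqref{eq:deltaequidatx}.

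\textbf{Main obstacle.} The principal technical difficulty is the joint balancing performed in the diagonalization step: the smoothing parameter $1/m(h)$, the Chebyshev margin $\eta(h)$, and the normalization $\delta(h)^n$ must be synchronized so that both the spectral exceptional fraction and the $L^1$ sandwich error are $o(1)$ relative to $r_{j_k}^n$. The logarithmic slack $|\log h|^{-\beta}$ supplied by Theorem~\ref{thm:deltaQEinM} is exactly the budget available for this balance, and it is what forces the admissible range down to $\alpha<1/(2n)$; any larger exponent would collapse the Chebyshev margin and prevent simultaneous control of the approximation and the spectral deviation.
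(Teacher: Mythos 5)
Your approach --- approximate $\chi_{B(x_0,r)}$ from above and below by smooth bump functions, apply Theorem~\ref{thm:deltaQEinM}, run Chebyshev plus a diagonal extraction, then sandwich --- is the same strategy the paper uses. However, there is a concrete gap in the Chebyshev step: the implicit constant in $V_2(h,a^{b}_{x_0})=O(\delta(h)^{2n}|\log h|^{-\beta})$ depends on derivatives of $b$. Tracing it back through the semiclassical trace formula (Proposition~\ref{prop:STF}), the constant is controlled by $\|b\|_{C^{2n+8}}$, so for a one-parameter family $b^{\pm}_m$ that approximates $\chi_{B(0,1)}$ with $L^1$-error $1/m$, the constant grows like a power of $m$, say $m^{K}$ with $K\approx 2n+8$. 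You write $V_2=O(\delta^{2n}|\log h|^{-\beta})$ as if this were uniform in $m$, and then take $m=m(h)\to\infty$. With your stated choice $\eta(h)=|\log h|^{-\beta/3}$ and $1/m(h)\ll\eta(h)$ (i.e.\ $m(h)\gg|\log h|^{\beta/3}$), the exceptional fraction in each window is really
$$\frac{m(h)^{K}\,|\log h|^{-\beta}}{\eta(h)^2}\gtrsim|\log h|^{(K-1)\beta/3}\longrightarrow\infty,$$
so the claimed density-one extraction fails. The condition $1/m(h)\ll\eta(h)$ is also not the right one for the sandwich: you only need $1/m(h)\to0$ and $\eta(h)\to0$; forcing $m(h)$ to grow as fast as $1/\eta(h)$ only makes the $m$-dependence of the constant worse.

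The fix is exactly the bookkeeping the paper carries out: keep the $b$-dependence explicit, e.g.\ $V_2\le C_{\M,\phi}\|b\|_{C^m}\delta(h)^{2n}|\log h|^{-\beta}$, use bump functions $b_i$ with $\|b_i\|_{C^m}\le ci^m$, extract one full-density subsequence that works simultaneously for the countable family $\{b_i\}$ by the usual diagonal argument, and only \emph{at the end} optimize $i$ as a (slowly growing) function of $\lambda_{j_k}$ --- the paper takes $i=\big[|\log\lambda_{j_k}|^{\tilde\beta/(m+1)}\big]$ --- so that both the approximation error $r^n/i$ and the term $i^m r^n|\log\lambda_{j_k}|^{-\tilde\beta}$ are $o(r^n)$. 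Without making the $m$-dependence of the constant explicit, the parameter balance in your last paragraph cannot be verified. As a side remark, the constraint $\alpha<1/(2n)$ comes solely from needing $\beta>0$ in Theorem~\ref{thm:deltaQEinM}; the diagonal/Chebyshev balance costs only an arbitrarily small further loss and does not change the threshold on $\alpha$.
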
 

Notice that in the above corollary, the subsequence $\{u_{j_k}\}$ depends on the point $x_0$. To get a uniform result, we use a covering argument to get a weaker result as uniform compatibility. In the process, we concede certain loss on the shrinking rate.

\begin{cor}\label{cor:deltaequidinM}
Let $(\M,g)$ be negatively curved. Assume that
$$0\le\alpha<\frac{1}{3n}\quad\text{and}\quad r(\lambda)=(\log\lambda)^{-\alpha}.$$ 
Given any ONBE $\{u_j\}_{j=0}^\infty$ with $\Delta u_j=\lambda_j^2u_j$, there exists a full density subsequence $\{u_{j_k}\}$ such that 
\begin{equation}\label{eq:deltaequidinM}
c\Vol(B\big(x,r_{j_k}))\le\int_{B(x,r_{j_k})}|u_{j_k}|^2\,d\Vol\le C\Vol(B(x,r_{j_k}))\quad\text{as }k\to\infty,
\end{equation}
uniformly for all $x\in\M$, where the positive constants $c$ and $C$ depends only on $\M$.
\end{cor}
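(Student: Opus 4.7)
The plan is to deduce the uniform statement from Theorem \ref{thm:deltaQEinM} applied along a fine $h$-dependent covering of $\M$, paying for the union bound with a stronger upper bound on $\alpha$. Fix $\alpha\in[0,1/(3n))$ and choose $\beta$ with $n\alpha<\beta<1-2n\alpha$; this interval is nonempty precisely because $3n\alpha<1$. For each $h$ set $r=r(h)=|\log h|^{-\alpha}$ and pick a maximal $(r/4)$-separated set $\{x_i^h\}_{i=1}^{N(h)}\subset\M$. By volume comparison, $N(h)\le Cr^{-n}=C|\log h|^{n\alpha}$, the balls $B(x_i^h,r/2)$ cover $\M$, and each $x\in\M$ satisfies $d(x,x_i^h)\le r/2$ for some $i$, so that
$$B(x_i^h,r/2)\subset B(x,r)\subset B(x_i^h,3r/2).$$

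Fix two nonnegative base functions $b_-,b_+\in C_0^\infty(\R^n)$ with $\supp b_-\subset B(0,1/2)$, $\int b_->0$, and $b_+\ge 1$ on $B(0,3/2)$. The associated $\delta$-localized symbols $a^{b_\pm}_{x_i^h}$ from \eqref{eq:deltaMsymbol} then satisfy, modulo $O(h^\infty)$ arising from the $h$-microlocalization of eigenfunctions with $E_j\in[1,1+h]$ onto a neighborhood of $S^*\M$,
$$c_-\,\bigl\langle\Op_h(a^{b_-}_{x_i^h})u_j,u_j\bigr\rangle\le\int_{B(x,r)}|u_j|^2\,d\Vol\le C_+\,\bigl\langle\Op_h(a^{b_+}_{x_i^h})u_j,u_j\bigr\rangle,$$
for every $x\in\M$ and its associated center $x_i^h$, with $c_-,C_+>0$ independent of $h,i,j,x$. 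Moreover $\mu_1(a^{b_\pm}_{x_i^h})$ and $\Vol(B(x,r))/\Vol(\M)$ are both comparable to $r^n$ uniformly in $h,i,x$. Hence it suffices to establish, for a full density subsequence $\{u_{j_k}\}$, the approximation
$$\bigl|\langle\Op_h(a^{b_\pm}_{x_i^h})u_{j_k},u_{j_k}\rangle-\mu_1(a^{b_\pm}_{x_i^h})\bigr|=o(r^n)$$
simultaneously for all centers $x_i^h$ of the $h(\lambda_{j_k})$-scale net.

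The key step is a Chebyshev inequality applied to Theorem \ref{thm:deltaQEinM}. The bound $V_2(h,a^b_{x_i^h})\le Cr^{2n}|\log h|^{-\beta}$ yields, for each center and each $b\in\{b_-,b_+\}$,
$$\#\Bigl\{j:E_j\in[1,1+h],\ \bigl|\langle\Op_h(a^b_{x_i^h})u_j,u_j\rangle-\mu_1(a^b_{x_i^h})\bigr|>\ve\,r^n\Bigr\}\le C\ve^{-2}h^{-(n-1)}|\log h|^{-\beta}.$$
A union bound over the $N(h)\le C|\log h|^{n\alpha}$ centers gives at most $C\ve^{-2}h^{-(n-1)}|\log h|^{n\alpha-\beta}$ bad indices in the window $E_j\in[1,1+h]$. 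Since Weyl's law produces $\sim h^{-(n-1)}$ eigenvalues in this window, the bad fraction is $O(|\log h|^{n\alpha-\beta})$, which tends to $0$ precisely because $\beta>n\alpha$.

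A standard diagonal extraction along the dyadic windows $E_j\in[1,1+h_m]$ with $h_m=2^{-m}$ and $\ve=\ve_m\to 0$ chosen slowly (as in \cite[Lemma 3]{Ze1}) then produces a full density subsequence $\{u_{j_k}\}$ along which the discrepancies at every center of the $h(\lambda_{j_k})$-scale net are simultaneously $o(r_{j_k}^n)$; combined with the sandwich of the second paragraph, this gives \eqref{eq:deltaequidinM} uniformly in $x\in\M$. The main obstacle is exactly the tension in the union bound: $\beta$ must be simultaneously below $1-2n\alpha$ (so that Theorem \ref{thm:deltaQEinM} supplies the variance bound) and above $n\alpha$ (so that the proliferation of cells does not swamp the Chebyshev estimate), forcing $3n\alpha<1$; this is precisely the degradation from the pointwise threshold $\alpha<1/(2n)$ of Corollary \ref{cor:deltaequidatx} to the uniform threshold $\alpha<1/(3n)$ here.
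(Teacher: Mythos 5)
Your proposal follows essentially the same route as the paper's: cover $\M$ with a net of $O(r^{-n})=O(|\log h|^{n\alpha})$ balls at the scale $r=|\log h|^{-\alpha}$, apply the variance bound of Theorem~\ref{thm:deltaQEinM} at each center, Chebyshev plus a union bound over the $|\log h|^{n\alpha}$ cells (which is exactly what degrades the admissible range from $\alpha<1/(2n)$ to $\alpha<1/(3n)$, via the need for $n\alpha<\beta<1-2n\alpha$), sandwich $\int_{B(x,r)}|u_j|^2$ between two $\delta$-localized testing symbols, and finish with the standard tiling-plus-extraction argument. The only cosmetic differences are that you use a single nearest-center sandwich $B(x_i,r/2)\subset B(x,r)\subset B(x_i,3r/2)$ where the paper instead records a bounded-multiplicity covering property (Lemma~\ref{lemma:covering}(1)) together with a containment property (Lemma~\ref{lemma:covering}(2)), and that the energy windows in your diagonal step should be taken tiling (as in \cite[Lemma 3]{Ze1}, or as in the paper's $h_{m+1}^{-2}=h_m^{-2}+h_m^{-1}$) rather than literally $h_m=2^{-m}$, which leaves gaps; neither affects the substance.
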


\subsection*{Outline and organization}
We prove small scale quantum ergodicity in the same spirit as in Theorems \ref{thm:QE} and \ref{thm:scQE}. The key ingredients are semiclassical trace formula and Egorov's theorem. The former requires to treat symbols in a more general class other than $C^\infty$ ones independent of $h$. The latter, connecting time evolution of classical observables (the symbol $a$ in $T^*\M$) and quantum observables (semiclassical pseudodifferential operator $\Op_h(a)$ on $L^2(\M)$), also concerns symbols depending on $h$. Such correspondence is valid up to the Ehrenfest time $T_E\approx|\log h|$ (Bouzouina and Robert \cite{BR}). In the process, we trace all the dependence on $h$.

Thus, small scale quantum ergodicity is reduced to estimating the time-average of the quantum observable, which is controlled by the time-average of its principal symbol. Liverani \cite{Liv}'s exponential decay of correlations then can be used to give a quantitative estimate on the time-average of the symbol in terms of its H\"older norm; indeed, the time-average has polynomial decay in time. By properly choosing the $\delta(h)$ and thus the symbol class $S_{\delta(h)}(\M)$, we can prove Theorem \ref{thm:deltaQE}. Since Ehrenfest time $T_E$ is of order $|\log h|$, we can only select such symbols with H\"older norm of order $|\log h|^{-\alpha}$ for some $\alpha>0$. This is essentially the reason why we have logarithmically shrinking rates.

We organize this paper as follows. In Sections \ref{sec:SA} and \ref{sec:geo}, we review semiclassical analysis and geodesic flows in negatively curved manifolds, respectively. In Section \ref{sec:deltaQE}, we prove small scale quantum ergodicity theorems, i.e. Theorems \ref{thm:deltaQE}, \ref{thm:deltaQEdeltamicrosymbol}, and \ref{thm:deltaQEinM}. In Section \ref{sec:deltaequidinM}, we prove small scale mass equidistribution in Corollaries \ref{cor:deltaequidatx} and \ref{cor:deltaequidinM}; we also point out some further investigation on small scale quantum ergodicity, particularly on Question \ref{q:deltaequidinM}.

\subsection*{A note on the previous works} 
Besides the above mentioned previous results on quantum ergodicity, there are other exciting development in this area recently. We refer to Zelditch \cite{Ze3} and Sarnak \cite{Sa2} for its current stage.

\section{Semiclassical analysis}\label{sec:SA}
In this section, we review semiclassical analysis that will be used to prove small scale quantum ergodicity. Most of the notations and facts below are fairly standard. We refer to Zworski \cite{Zw} for a complete treatment in this subject. 

\subsection{Phase space}
$\M$ can be a bounded open set in $\R^n$ or a compact Riemannian manifold. An element, called a state, in the cotangent bundle $T^*\M$ is denoted as $z=(x,\xi)$ with $x\in\M$ and $\xi\in T^*_x\M$. 

\subsection{Symbol classes}\label{sec:symbol}
Let $m\in\R$ and $\rho\in[0,1/2)$. The symbol class $S^m_{h^\rho}(\M)$ is defined as follows: $a\in C^\infty(T^*\M)$ belongs to $S^m_{h^\rho}(\M)$ if and only if for each multi-indices $\alpha,\beta$, there exists a uniform constant $C_{\alpha,\beta}$ independent of $h$ such that
$$\sup_{x\in\M,\xi\in T_x^*\M}|\partial^\alpha_x\partial^\beta_\xi a|\le C_{\alpha,\beta}\,h^{-\rho(|\alpha|+|\beta|)}\langle\xi\rangle^{m-|\beta|},$$
where $\langle\xi\rangle=(1+|\xi|_x^2)^{1/2}$.
\begin{enumerate}
\item If $\rho=0$, we denote $S^m_{h^\rho}(\M)$ by $S^m(\M)$.
\item If $m=0$, we denote $S^m_{h^\rho}(\M)$ by $S_{h^\rho}(\M)$.
\item If $a$ has compact support which satisfies the above estimate, we say that $a\in S^\comp_{h^\rho}(\M)$. Clearly, $S^{\comp}_{h^\rho}(\M)\subset S^m_{h^\rho}(\M)$ for all $m\in\R$.
\item We denote $S^{-\infty}_{h^\rho}(\M)=\cap_{m\in\R}S^m_{h^\rho}(\M)$ and $S^\infty_{h^\rho}(\M)=\cup_{m\in\R}S^m_{h^\rho}(\M)$.
\end{enumerate}

These classes are independent of the choice of coordinates in $\M$. Moreover, the seminorms $|\cdot|_{\alpha,\beta}$ in $S_{h^\rho}(\M)$ is defined by the best constant $C_{\alpha,\beta}$ that can be used in the above inequality.

\begin{ex}
Let $\delta(h)$ be $\rho$-admissible. Then $S_{\delta(h)}(\M)\subset S_{h^\rho}(\M)$; moreover, the $\delta$-microlocalized symbols and $\delta$-localized symbols are in $S_{\delta(h)}(\M)\cap S^\comp_{h^\rho}(\M)$.
\end{ex}

\subsection{Quantizations in $\R^n$}\label{sec:QinR}
Every classical observable $a$ in the phase space $T^*\R^n$ corresponds to a quantum observable $\Op_h(a)$ as a semiclassical pseudodifferential operator acting on $L^2$ functions in $\R^n$. In this note, we use Weyl quantization.

\begin{defn}[Weyl quantization]
Given $a\in S^m_{h^\rho}(\R^n)$, $\rho\in[0,1/2)$, we define the Weyl quantization
$$\Op_h(a)u(x)=\frac{1}{(2\pi h)^n}\int_{\R^n}\int_{\R^n}e^{i(x-y)\cdot\eta/h}a\left(\frac{x+y}{2},\eta;h\right)u(y)\,d\eta dy$$
for $u\in\mathcal S(\R^n)$.
\end{defn}

\begin{rmk}
$\Op_h(a)$ is self-adjoint if $a$ is a real-valued symbol.
\end{rmk}

\subsection{Quantization in $\M$}\label{sec:QinM}
We now define $\Psi^m_{h^\rho}(\M)$ of semiclassical pseudodifferential operators with symbols in $S^m_{h^\rho}(\M)$, $\rho\in[0,1/2)$; then we establish the correspondence of $A\in\Psi^m_{h^\rho}(\M)$ and its semiclassical principal symbol $a$. See Zworski \cite[Section 14.2]{Zw} for more details, and also Dyatlov and Guillarmou \cite[Section 3.1]{DyGu} for symbols in $S_{h^\rho}^m(\M)$. The correspondence is one-to-one modulo lower order terms. Denote
$$a=\sigma(A):\Psi^m_{h^\rho}(\M)\to S^m_{h^\rho}(\M)/h^{1-2\rho}S^{m-1}_{h^\rho}(\M),$$
and its right inverse, a non-canonical quantization map for $a\in S^m_{h^\rho}(\M)$:
$$A=\Op_h(a):S^m_{h^\rho}(\M)\to\Psi^m_{h^\rho}(\M).$$
$\sigma(A)$ is called the principal symbol of $A$. It is modulo $h^{1-2\rho}S^{m-1}_{h^\rho}(\M)$ unique under change of quantizations and change of local coordinates. Following the same fashion in Section \ref{sec:symbol},
\begin{enumerate}
\item if $\rho=0$, we denote $\Psi^m_{h^\rho}(\M)$ by $\Psi^m(\M)$;
\item if $m=0$, we denote $\Psi^m_{h^\rho}(\M)$ by $\Psi_{h^\rho}(\M)$;
\item if $a$ has compact support, we say that $\Op(a)\in\Psi^\comp_{h^\rho}(\M)$. $\Psi^{\comp}_{h^\rho}(\M)\subset\Psi^m_{h^\rho}(\M)$ for all $m\in\R$. Moreover, if $A\in\Psi^\comp_{h^\rho}(\M)$, then $A=\Op_h(a)$ for some $a\in S^\comp_{h^\rho}(\M)$;
\item we denote $\Psi^{-\infty}_{h^\rho}(\M)=\cap_{m\in\R}\Psi^m_{h^\rho}(\M)$ and $\Psi^\infty_{h^\rho}(\M)=\cup_{m\in\R}\Psi^m_{h^\rho}(\M)$.
\end{enumerate}
The usual operations involving semiclassical pseudodifferential operators are as follows. Let $A\in\Psi^m_{h^\rho}(\M)$ and $B\in\Psi^{m'}_{h^\rho}(\M)$.
\begin{enumerate}
\item Let $A^\star$ be the adjoint operator of $A$ in $L^2(\M)$. Then
\begin{equation}\label{eq:SDOadjoint}
\sigma(A^\star)=\ol{\sigma(A)}+O_{S^{m-1}_{h^\rho}(\M)}(h^{1-2\rho}).
\end{equation}
\item
\begin{equation}\label{eq:SDOproduct}
\sigma(AB)=\sigma(A)\sigma(B)+O_{S^{m+m'-1}_{h^\rho}(\M)}(h^{1-2\rho}).
\end{equation}
\item
\begin{equation}\label{eq:SDOcommutator}
\sigma([A,B])=-ih\{\sigma(A),\sigma(B)\}+O_{S^{m+m'-2}_{h^\rho}(\M)}(h^{2(1-2\rho)}),
\end{equation}
where $\{\cdot,\cdot\}$ stands for the Poisson bracket defined by
$$\{a,b\}=\frac{\partial a}{\partial x}\frac{\partial b}{\partial\xi}-\frac{\partial a}{\partial\xi}\frac{\partial b}{\partial x}.$$
\end{enumerate}

We also need the following $L^2$ boundedness of semiclassical pseudodifferential operators in $\Psi_{h^\rho}(\M)$, see e.g. \cite[Theorem 4.23]{Zw}.

\begin{thm}[$L^2$ boundedness]\label{thm:SDOL2}
Let $a\in S_{h^\rho}(\M)$ for $\rho\in[0,1/2)$. Then
$$\|\Op_h(a)u\|_{L^2(\M)}\le C\|u\|_{L^2(\M)},$$
where $C$ depends on finite number of seminorms of $a$, and is independent of $h$.
\end{thm}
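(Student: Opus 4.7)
This is the semiclassical Calderón--Vaillancourt estimate for the symbol class $S_{h^\rho}$ with $\rho \in [0,1/2)$. My strategy is a two-stage reduction: first reduce from $\M$ to $\R^n$ by localization, then reduce from $S_{h^\rho}(\R^n)$ to the classical (non-semiclassical) Hörmander class $S(1) = \{b\in C^\infty(\R^{2n}) : \sup|\partial^\alpha b|<\infty\}$ by a metaplectic rescaling, and finally prove the $S(1)$ statement by Cotlar--Stein almost orthogonality.

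First I would choose a finite partition of unity $\{\chi_j\}$ on $\M$ subordinate to coordinate charts and write $\Op_h(a) = \sum_{j,k} \chi_j \Op_h(a) \chi_k$. The off-diagonal pieces $\chi_j\Op_h(a)\chi_k$ with $\supp\chi_j\cap\supp\chi_k=\emptyset$ have kernels that are $O(h^\infty)$ in every $C^m$ norm, by integration by parts in the oscillatory integral defining the kernel in local coordinates (the phase $(x-y)\cdot\eta/h$ is non-stationary in $\eta$ when $x\ne y$, and each integration by parts costs one derivative of $a$, controlled by the $S_{h^\rho}$ seminorms since $\rho<1$). These pieces are therefore $L^2$-bounded with constants $O(h^\infty)$. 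The diagonal pieces reduce to operators $\Op_h(\tilde a)$ on $\R^n$ with $\tilde a\in S_{h^\rho}(\R^n)$ of compact support, with seminorms controlled by those of $a$.

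Second, for $a\in S_{h^\rho}(\R^n)$ I would use the $L^2$-isometry $T_h u(x)=h^{n\rho/2}u(h^\rho x)$; a direct change of variables in the Weyl integral yields
\begin{equation*}
T_h^{-1}\,\Op_h(a)\,T_h \;=\; \Op_{\tilde h}(\tilde a), \qquad \tilde h = h^{1-2\rho}, \qquad \tilde a(x,\xi)=a(h^\rho x,\, h^\rho \xi).
\end{equation*}
The chain rule gives $\partial_x^\alpha\partial_\xi^\beta \tilde a = h^{\rho(|\alpha|+|\beta|)}(\partial_x^\alpha\partial_\xi^\beta a)(h^\rho x,h^\rho\xi)$, so the hypothesis $|\partial^\alpha_x\partial^\beta_\xi a|\le C_{\alpha\beta}h^{-\rho(|\alpha|+|\beta|)}\langle\xi\rangle^{-|\beta|}$ yields $\sup|\partial_x^\alpha\partial_\xi^\beta \tilde a|\le C_{\alpha\beta}$ uniformly in $h$. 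Since $T_h$ is unitary, it now suffices to bound $\Op_{\tilde h}(\tilde a)$ in $L^2(\R^n)$ uniformly for $\tilde h\in(0,1]$ and $\tilde a\in S(1)$ with prescribed seminorm bounds.

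Third, for the $S(1)$ case I would apply Cotlar--Stein. Decompose $\tilde a = \sum_{\alpha\in\Z^{2n}}\tilde a_\alpha$ with $\tilde a_\alpha$ supported in a unit box centered at $\alpha$, using a fixed smooth partition of unity in phase space. The oscillatory kernel of $\Op_{\tilde h}(\tilde a_\alpha)\Op_{\tilde h}(\tilde a_\beta)^*$ (and of the other composition) can be written as an integral with phase $((x-y)\cdot\eta - (x'-y)\cdot\eta')/\tilde h$. Integration by parts in the momentum variables exploits the separation $|\alpha_x-\beta_x|$, while integration by parts in the position variable exploits $|\alpha_\xi-\beta_\xi|$; combining these gives
\begin{equation*}
\|\Op_{\tilde h}(\tilde a_\alpha)\Op_{\tilde h}(\tilde a_\beta)^*\| + \|\Op_{\tilde h}(\tilde a_\alpha)^*\Op_{\tilde h}(\tilde a_\beta)\| \;\le\; C_N\,\langle\alpha-\beta\rangle^{-N}
\end{equation*}
for any $N$, with constants depending only on finitely many seminorms of $\tilde a$ (hence of $a$). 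The Cotlar--Stein lemma then gives $\|\Op_{\tilde h}(\tilde a)\|_{L^2\to L^2}\le \sup_\alpha\sum_\beta C_N^{1/2}\langle\alpha-\beta\rangle^{-N/2}<\infty$.

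\textbf{Main obstacle.} The analytically delicate point is the Cotlar--Stein step: one must justify the integration by parts in the four-fold oscillatory integral so that each $\tilde h$-factor arising from differentiating the phase is either absorbed into the $\langle\alpha-\beta\rangle^{-1}$ gain or into a uniform constant, without any power of $\tilde h^{-1}$ surviving. This requires writing the cutoff factors relative to $\tilde h$-dependent scales and choosing the right transpose of the differential operators; the rest of the proof (localization on $\M$, rescaling, Weyl-quantization bookkeeping) is essentially formal and unchanged from the case $\rho=0$.
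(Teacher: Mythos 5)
The paper does not prove this statement; it simply cites \cite[Theorem~4.23]{Zw} (Zworski's semiclassical Calder\'on--Vaillancourt theorem for the classes $S_\delta$). Your proposal is a correct reconstruction of that textbook proof: localize to $\R^n$, rescale to unify the $x$- and $\xi$-derivatives, and invoke Cotlar--Stein. Two small remarks.

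First, the direction of the conjugation is reversed as written. With $T_h u(x)=h^{n\rho/2}u(h^\rho x)$, a direct change of variables gives $T_h\,\Op_h(a)\,T_h^{-1}=\Op_{\tilde h}(\tilde a)$ (not $T_h^{-1}\Op_h(a)T_h$), with $\tilde h=h^{1-2\rho}$ and $\tilde a(x,\xi)=a(h^\rho x,h^\rho\xi)$; equivalently, $T_h^{-1}\Op_h(a)T_h=\Op_h\bigl(a(h^{-\rho}\cdot,h^\rho\cdot)\bigr)$, which is not what you need. Since $T_h$ is a unitary this has no effect on the operator norm, but it is worth stating the identity in the right direction.

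Second, the ``main obstacle'' you flag is not really an obstacle. Having reduced to $\Op_{\tilde h}(\tilde a)$ with $\tilde a\in S(1)$, perform one further substitution $\eta\mapsto\tilde h\,\eta$ in the Weyl integral to obtain
$$\Op_{\tilde h}(\tilde a)=\Op_1\bigl(\tilde a(\cdot,\tilde h\,\cdot)\bigr),$$
and note that $\tilde a(\cdot,\tilde h\,\cdot)\in S(1)$ with the \emph{same} seminorm bounds for all $\tilde h\in(0,1]$, because each $\xi$-derivative brings down an extra factor $\tilde h\le 1$. After this reduction $\tilde h$ has disappeared entirely and you are proving the classical Calder\'on--Vaillancourt theorem for $\Op_1$ on $S(1)$, where the Cotlar--Stein estimates involve no small parameter whatsoever. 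The uniformity in $h$ (hence in $\tilde h$) is free.

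With these two adjustments your argument is complete and agrees with the proof in the cited reference.
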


\subsection{Semiclassical measures}\label{sec:scmeasures}
For each eigenfunction $u_j(h)$ of $h^2\Delta$, we define the distribution associated with $u_j(h)$ as
$$\mathcal W_j(a)=\left\langle\Op_h(a)u_j(h),u_j(h)\right\rangle\quad\text{for }a\in S(\M).$$
We see that $\mathcal W_j$ depends on local coordinates, partition of unity, etc. However, as $h\to0$, the accumulation points of $\mathcal W_j$ are independent of such choices. We define the semiclassical measures as the limit points of $\{\mathcal W_j\}$. Note that $h^2\Delta u_j(h)=E_ju_j(h)$, that is,
$$\left(h^2\Delta-E_j\right)u_j(h)=0.$$
Since the semiclassical symbol of $h^2\Delta-E_j$ is $|\xi|_x^2-E_j$, from Zworski \cite[Theorems 5.3 and 5.4]{Zw}, we have
\begin{thm}\label{thm:Hormander}
Let $0<c_1<c_2<\infty$. Then any semiclassical measure for $E_j\in[c_1,c_2]$ is supported in the energy shell 
$$\left\{(x,\xi)\in T^*\M:H(x,\xi)=|\xi|^2_x\in[c_1,c_2]\right\},$$ 
and is invariant under the geodesic flow $G_t$.
\end{thm}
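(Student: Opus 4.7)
The plan is to derive both assertions directly from the eigenvalue equation $h^2\Delta u_j(h) = E_j u_j(h)$ together with the symbolic calculus rules \eqref{eq:SDOproduct}--\eqref{eq:SDOcommutator} and the $L^2$ boundedness of Theorem \ref{thm:SDOL2}; no dynamical or ergodic information is needed, which is why this result sits at the ``abstract'' end of the paper's machinery.

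For the support statement, I would argue by elliptic division. Fix $a \in S(\M)$ with $\supp a$ disjoint from the shell $\{(x,\xi) \in T^*\M : |\xi|_x^2 \in [c_1, c_2]\}$. On $\supp a$ the symbol $p_j(x,\xi) := |\xi|_x^2 - E_j$ is bounded away from zero uniformly in $E_j \in [c_1, c_2]$, so standard symbolic division produces $b_j \in S^{-2}(\M)$ with $p_j b_j = a$ and seminorms controlled uniformly in $j$. The product rule \eqref{eq:SDOproduct} then yields
$$\Op_h(a) = \Op_h(b_j)\,(h^2\Delta - E_j) + h\, R_j,$$
with $\|R_j\|_{L^2 \to L^2}$ bounded uniformly in $h$. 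Pairing with $u_j$ and using $(h^2\Delta - E_j)u_j = 0$ gives $\mathcal W_j(a) = O(h)$, so every semiclassical accumulation point $\mu$ annihilates $a$, whence $\supp \mu \subset \{H \in [c_1, c_2]\}$.

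For $G_t$-invariance, I would exploit commutator vanishing. Self-adjointness of $h^2\Delta$ together with the eigenvalue equation produces, by a one-line manipulation,
$$\bigl\langle [h^2\Delta, \Op_h(a)] u_j, u_j \bigr\rangle = 0 \qquad \text{for every } a \in S(\M).$$
After localizing $H = |\xi|_x^2$ near the energy shell by a compactly supported cutoff $\chi$ equal to $1$ on $\{H \in [c_1, c_2]\}$---permissible because the support statement just proved forces the $(1-\chi)$ contribution to vanish in the limit---the commutator identity \eqref{eq:SDOcommutator} gives
$$[h^2\Delta, \Op_h(a)] = -ih\, \Op_h\bigl(\{H, a\}\bigr) + O_{L^2 \to L^2}(h^2).$$
Dividing by $h$ and passing to the limit along a subsequence defining $\mu$ yields $\mu(\{H, a\}) = 0$ for all $a \in S(\M)$. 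Since $\{H, \cdot\} = X_H$ generates $G_t$, this is exactly the infinitesimal weak form of $G_t$-invariance of $\mu$.

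The principal obstacle is technical rather than conceptual: $H = |\xi|_x^2$ is not a compactly supported symbol, so the commutator expansion \eqref{eq:SDOcommutator} does not apply verbatim to $[h^2\Delta, \Op_h(a)]$. My plan is to handle this by using the support conclusion first and then replacing $H$ by $\chi(H)\,H$; the difference then acts negligibly on the relevant matrix elements. A parallel $S^{-N}$ argument, invoking ellipticity of $h^2\Delta$ in $\{|\xi|_x \gg 1\}$, confirms that no mass of $\mu$ escapes to infinity in $\xi$, so $\mu$ genuinely lives on the compact shell. Once these cutoffs are in place, what remains is routine bookkeeping in the symbolic calculus of Section \ref{sec:SA}.
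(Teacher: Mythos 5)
Your proposal is correct and follows the standard textbook argument; the paper itself does not give a proof but simply cites Zworski \cite[Theorems 5.3 and 5.4]{Zw}, and the route you take --- elliptic division to localize to the energy shell, followed by the exact commutator vanishing $\langle[h^2\Delta,\Op_h(a)]u_j,u_j\rangle=0$ combined with \eqref{eq:SDOcommutator} to obtain $\mu(\{H,a\})=0$ --- is precisely what those cited theorems do. Your noted technical concern about $H$ not being compactly supported is correct but resolves more easily than you suggest: once the support statement is in hand one may restrict to test symbols $a$ with compact support near the shell, for which $\{H,a\}$ is automatically compactly supported, so the commutator expansion and $L^2$-boundedness apply directly without a separate cutoff of $H$.
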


In particular, the semiclassical measures corresponding to eigenvalues $E_j\in[1,1+h]$ is supported on the energy layer $S^*\M$. From this point of view, quantum ergodicity studies the impact of geodesic flow $G_t$ on the semiclassical measures. Theorem \ref{thm:scQE} asserts that if $G_t$ is ergodic on $S^*\M$, then the difference of $\mathcal W_j$ and the Liouville measure $\mu_1$ converges to $0$ in Ces\`aro summation for $E_j\in[1,1+h]$.

\subsection{Egorov's theorem}\label{sec:Egorov}
The geodesic flow $G_t:(x(0),\xi(0))\to(x(t),\xi(t))$ in $T^*\M$ is generated by the Hamilton equation
$$\frac{dx}{dt}=\frac{\partial H}{\partial\xi}(x,\xi),\quad\frac{d\xi}{dt}=-\frac{\partial H}{\partial x}(x,\xi),$$
in which $H(x,\xi)=|\xi|_x^2$. Then the time evolution of a classical symbol $a$ satisfies
$$\frac{d}{dt}a(G_t(x,\xi))=\{H,a\}(G_t(x,\xi)),$$
where $\{H,a\}$ is the Poisson bracket. The quantum time evolution of $\Op_h(a)$ is associated with the unitary Fourier integral operator $U(t)=e^{-ith\Delta}$, called the Schr\"odinger propagator of $h^2\Delta$. Egorov's theorem states that the quantum time evolution of $\Op_h(a)$ can be approximated by the quantization of classical time evolution of $a$, within finite time:
$$U(-t)\circ\Op_h(a)\circ U(t)\approx\Op_h(a\circ G_t).$$
Such correspondence connects the classical observable $a$ and the quantum observable $\Op_h(a)$ under time evolution. Precisely, if $a\in S(\M)$ and $|t|\le T<\infty$, then
$$\Big\|U(-t)\circ\Op_h(a)\circ U(t)-\Op_h(a\circ G_t)\Big\|_{L^2(\M)\to L^2(\M)}=O(h)\quad\text{as }h\to0.$$

\subsection{Semiclassical trace formula}
Here we use Schubert \cite[Proposition 1]{Sc} in the present context.
\begin{prop}\label{prop:STF}
Suppose that $\psi$ is a smooth function on $\R$ such that the Fourier transform $\hat\psi$ has compact support in a small neighbourhood of $0$ which contains no period of a periodic orbit of the geodesic flow $G_t$ on $S^*\M$. Then for every $a\in S(\M)$, we have
$$\left|h^{n-1}\sum_j\psi\left(\frac{1-E_j}{h}\right)\left\langle\Op_h(a)u_j,u_j\right\rangle-\frac{\hat\psi(0)}{(2\pi)^n}\mu_1(a)\right|\le Ch\|a\|_{C^{2n+8}},$$
where the constant $C$ depends on $\psi$ and $\M$.
\end{prop}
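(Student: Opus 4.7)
The plan is to rewrite the weighted spectral sum as a trace and evaluate it via Fourier inversion together with a parametrix for the Schr\"odinger propagator $U(t)=e^{-ith\Delta}$. By the spectral theorem,
$$h^{n-1}\sum_j\psi\!\left(\frac{1-E_j}{h}\right)\langle\Op_h(a)u_j,u_j\rangle = h^{n-1}\Trace\!\left(\psi\!\left(\frac{1-h^2\Delta}{h}\right)\Op_h(a)\right).$$
Writing $\psi(s)=(2\pi)^{-1}\int\hat\psi(t)e^{its}\,dt$, the operator on the right becomes $(2\pi)^{-1}\int\hat\psi(t)e^{it/h}U(t)\,dt$, so the quantity of interest is
$$I(h):=\frac{h^{n-1}}{2\pi}\int_\R\hat\psi(t)\,e^{it/h}\,\Trace\!\left(U(t)\Op_h(a)\right)\,dt.$$
Since $\supp\hat\psi$ is a small neighbourhood of $0$ containing no nonzero period of a closed orbit of $G_t$ on $S^*\M$, it remains to analyze $\Trace(U(t)\Op_h(a))$ uniformly for $t$ in this compact interval.

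For such $t$ I would construct, in local charts and glued by a partition of unity, a standard WKB/FIO parametrix for $U(t)$ whose phase solves the Hamilton--Jacobi equation for $H(x,\xi)=|\xi|_x^2$. Composing with $\Op_h(a)$ and taking the trace represents
$$\Trace\!\left(U(t)\Op_h(a)\right) = \frac{1}{(2\pi h)^n}\int_{T^*\M} e^{i\Phi(x,\xi,t)/h}\,a(x,\xi)\,\sigma(x,\xi,t;h)\,dx\,d\xi,$$
modulo a smoothing remainder, with $\sigma$ an amplitude of order $1$ and a phase satisfying $\Phi|_{t=0}=0$, $\partial_t\Phi|_{t=0}=-|\xi|_x^2$, and with $(x,\xi)$-critical points for $t\ne 0$ exactly at fixed points of $G_t$ in $T^*\M$. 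The no-periodic-orbit hypothesis excludes such fixed points on $\{|\xi|_x=1\}$ for nonzero $t\in\supp\hat\psi$, so after multiplying by $e^{it/h}$ the total phase $\Phi(x,\xi,t)+t$ is critical only on the submanifold $\{t=0,\ |\xi|_x=1\}$.

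I would then apply the method of stationary phase jointly in $(t,\xi)$ about this critical manifold. The leading contribution collapses to
$$I(h) = \frac{1}{(2\pi)^n h}\int_{T^*\M} a(x,\xi)\,\psi\!\left(\frac{1-|\xi|_x^2}{h}\right)\,dx\,d\xi + O(h\|a\|_{C^{2n+8}}),$$
after which the substitution $E=|\xi|_x^2$, together with a Taylor expansion of the co-sphere integral of $a$ near $E=1$ and the identity $\int\psi=\hat\psi(0)$, extracts the term $\hat\psi(0)(2\pi)^{-n}\mu_1(a)$ with an $O(h)$ residual, giving the claimed asymptotic.

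The main obstacle is keeping the remainder uniform in $a$ with the explicit norm $\|a\|_{C^{2n+8}}$. The exponent $2n+8$ is dictated by the number of derivatives needed to justify stationary phase in the $(2n+1)$-dimensional variable $(t,x,\xi)$ together with the additional derivatives lost in truncating the parametrix in the tail of $\hat\psi$ by integration by parts in $t$. Carefully bookkeeping these losses, while verifying that the FIO parametrix is valid uniformly across the fixed-size support of $\hat\psi$ and that the partition-of-unity patching produces only lower-order contributions, is the technical heart of the argument.
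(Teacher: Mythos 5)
The paper does not actually prove Proposition~\ref{prop:STF}; the text introduces it with ``Here we use Schubert [Proposition~1] in the present context,'' so the result is cited verbatim from Schubert \cite{Sc} rather than established in the paper. Your sketch follows the standard semiclassical trace formula argument---spectral-theorem rewriting as a trace, Fourier inversion in time, an FIO parametrix for $U(t)=e^{-ith\Delta}$, and stationary phase about the critical manifold $\{t=0,\ |\xi|_x=1\}$ with the no-period hypothesis excluding all other components---which is precisely how Schubert obtains the estimate, so the approach is correct; the one genuinely technical point, the derivative bookkeeping that produces the explicit $\|a\|_{C^{2n+8}}$ (stationary-phase remainder plus integration by parts in the tail of $\hat\psi$), is exactly the part you flag as ``the technical heart'' rather than carry out.
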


\section{Geodesic flows in negatively curved manifolds}\label{sec:geo}
In this section, we gather some facts on the geodesic flow $G_t$ in a negatively curved manifold $(\M,g)$. Recall that $H(x,\xi)=|\xi|^2_x$. $G_t$ is Anosov on $S^*\M$, that is, the tangent bundle $TS^*\M$ splits into $G_t$-invariant sub-bundles 
$$E^u(v)\oplus E^s(v)\oplus\R X_H(v)\quad\text{for }v\in TS^*\M.$$ 
Here, $E^u$ and $E^s$ are the unstable and stable subspaces, respectively. They satisfy
\begin{equation}\label{eq:Anosov}
\begin{cases}
\|dG_tv\|\le Ce^{-kt}\|v\| & \forall v\in E^s,t\ge0;\\
\|dG_tv\|\le Ce^{kt}\|v\| & \forall v\in E^u,t\le0,
\end{cases}\quad\text{for some }k>0,
\end{equation}
where $G_t:z\to G_t(z)$, $dG_t:T_zS^*\M\to T_{G_t(z)}S^*\M$ is the differential, and $\|\cdot\|$ is the norm defined in $T^*\M$ (e.g. by the Sasaki metric, c.f. Ballmann \cite{B}). The sub-bundles are integrable and induce stable and unstable foliations. We refer to Katok and Hasselblatt \cite{KH} for background. 

\subsection{Egorov's theorem until the Ehrenfest time}\label{sec:Ehrenfest}
In a negatively curved manifold $(\M,g)$, we use the above information on its geodesic flow to describe the long-time evolution $a\circ G_t$ of the classical observable $a$ and its quantum long-time evolution in the Egorov's theorem. See Bouzouina and Robert \cite{BR} in a more general setting.  

Let $\mathcal E=\{(x,\xi)\in T^*\M:1/2\le|\xi|_x\le3/2\}$ be an energy shell in $T^*\M$. So $\mathcal E\supset S^*\M$. Also, $G_t$ preserves $\mathcal E$. From Anantharaman and Nonnenmacher \cite[Section 5.2]{AN} that
$$\sup_{z\in\mathcal E}|\partial_z^\alpha G_t(z)|\le C_\alpha e^{l|\alpha||t|}\quad\text{for }t\in\R,$$
where $l$ can be chosen as a number that is greater than the maximal expansion rate of $G_t$ in $\mathcal E$. Now consider a symbol $a\in S^\comp_{h^\rho}(\M)$, $\rho\in[0,1/2)$, compactly supported in $\mathcal E$. Clearly $a\circ G_t$ is still compactly supported in $\mathcal E$. Moreover, we have
$$\sup_{z\in\mathcal E}|\partial_z^\alpha(a\circ G_t)(z)|\le C_{a,\alpha}e^{l|\alpha||t|}h^{-\rho|\alpha|}\quad\text{for }t\in\R.$$
If $\epsilon\in[0,1/2-\rho)$, setting
$$T_E=\frac{|\log h|}{l},$$
then 
$$\sup_{z\in\mathcal E}|\partial_z^\alpha(a\circ G_t)(z)|\le C_{a,\alpha}h^{-(\rho+\epsilon)|\alpha|}\quad\text{for }|t|\le\epsilon T_E.$$
This means that $a\circ G_t\in S^\comp_{\rho+\epsilon}(\M)$ if $|t|\le\epsilon T_E$. Since $\rho+\epsilon\in[0,1/2)$, $a\circ G_t$ is still in a nice symbol class. We call $T_E$ the Ehrenfest time.
Then Egorov's theorem in Section \ref{sec:Egorov}, connecting the time evolution of a classical observable $a$ and its quantum counterpart $\Op_h(a)$, can be extended to more general symbols until Ehrenfest time. See Anantharaman \cite[Theorem 4.2.4]{Ana}. 
\begin{thm}[Egorov's theorem until the Ehrenfest time]\label{thm:Egorov}
Let $a\in S^\comp_{h^\rho}(\M)$, $\rho\in[0,1/2)$, compactly supported in $\mathcal E$. Set $\epsilon\in[0,1/2-\rho)$. Then
$$\sup_{|t|\le\epsilon T_E}\Big\|U(-t)\circ\Op_h(a)\circ U(t)-\Op_h(a\circ G_t)\Big\|_{L^2(\M)\to L^2(\M)}=O(h^{1-2\rho-2\epsilon})$$
as $h\to0$.
\end{thm}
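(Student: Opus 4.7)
The plan is a Duhamel-style interpolation between the quantum evolution and the quantized classical evolution, carefully tracking the $h$-dependent symbol classes. By symmetry I treat $t\ge0$ only. Introduce the interpolating family
$$F(s)=U(-s)\circ\Op_h(a\circ G_{t-s})\circ U(s),\qquad s\in[0,t],$$
so that $F(0)=\Op_h(a\circ G_t)$ and $F(t)=U(-t)\circ\Op_h(a)\circ U(t)$. Hence
$$U(-t)\circ\Op_h(a)\circ U(t)-\Op_h(a\circ G_t)=\int_0^t F'(s)\,ds,$$
and the task reduces to bounding the integrand uniformly on $[0,\epsilon T_E]$.

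Computing $F'(s)$ yields two contributions: differentiation of the Schr\"odinger propagators $U(\pm s)=e^{\mp ish\Delta}$ produces $U(-s)\circ\tfrac{i}{h}[h^2\Delta,\Op_h(a\circ G_{t-s})]\circ U(s)$, and differentiation of the symbol, via $\partial_s(a\circ G_{t-s})=-\{H,a\circ G_{t-s}\}$, produces $-U(-s)\circ\Op_h(\{H,a\circ G_{t-s}\})\circ U(s)$. By the commutator rule \eqref{eq:SDOcommutator} applied to $h^2\Delta$ and $\Op_h(a\circ G_{t-s})$, the Poisson-bracket parts exactly cancel and
$$F'(s)=U(-s)\circ\Op_h(r_{t-s})\circ U(s),$$
where $r_{t-s}$ is the Moyal subprincipal remainder in \eqref{eq:SDOcommutator}.

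The crucial step is estimating $\|\Op_h(r_{t-s})\|_{L^2\to L^2}$ uniformly for $s\in[0,\epsilon T_E]$. By the discussion of Section \ref{sec:Ehrenfest}, for $|t-s|\le\epsilon T_E$ the evolved symbol $a\circ G_{t-s}$ belongs to $S^\comp_{h^{\rho+\epsilon}}(\M)$ with seminorms uniform in $h$, a consequence of the exponential derivative bound $|\partial_z^\alpha G_t(z)|\le C_\alpha e^{l|\alpha||t|}$ in the Anosov setting. Applying \eqref{eq:SDOcommutator} within this class and invoking the $L^2$ boundedness in Theorem \ref{thm:SDOL2}, the operator norm of $\Op_h(r_{t-s})$ is controlled by an appropriate power of $h$; unitarity of $U(\pm s)$ then transfers the same bound to $F'(s)$.

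Finally, integrating $\|F'(s)\|_{L^2\to L^2}$ over $|s|\le\epsilon T_E=\epsilon|\log h|/l$ multiplies the per-step estimate by at most $O(|\log h|)$, and this logarithmic factor is absorbed into the power of $h$ by running the argument with $\epsilon$ replaced by a slightly larger value still inside $[0,1/2-\rho)$; the result is the stated $O(h^{1-2\rho-2\epsilon})$. I expect the main technical obstacle to be precisely this symbol-seminorm bookkeeping: one must verify that throughout $|s|\le\epsilon T_E$ both $a\circ G_{t-s}$ and the Moyal remainder $r_{t-s}$ remain in $S^\comp_{h^{\rho+\epsilon}}(\M)$ with seminorms uniform in $h$, and trace how the $L^2$ norm of $\Op_h(r_{t-s})$ depends on $h$ and $s$ through those seminorms, following \cite{BR}. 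This is precisely the step at which the negative curvature hypothesis, through the uniform hyperbolicity of $G_t$ on $\mathcal E$, enters the argument.
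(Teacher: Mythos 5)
The paper does not prove Theorem~\ref{thm:Egorov} itself but simply cites Anantharaman \cite[Theorem 4.2.4]{Ana} (and implicitly Bouzouina--Robert \cite{BR}); so there is no in-paper argument to compare against. Your Duhamel interpolation with the family $F(s)=U(-s)\circ\Op_h(a\circ G_{t-s})\circ U(s)$ is precisely the standard route to Egorov-to-Ehrenfest-time statements, and the structural steps are right: $F(0)$ and $F(t)$ are the two operators in question, the quantum commutator $\tfrac{i}{h}[h^2\Delta,\cdot]$ produces the Poisson bracket $\{H,\cdot\}$ at principal level, and this cancels $\partial_s(a\circ G_{t-s})=-\{H,a\circ G_{t-s}\}$, leaving only the Moyal subprincipal remainder $r_{t-s}$. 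You also correctly identify where the hypotheses enter: the Anosov derivative bound $|\partial_z^\alpha G_t(z)|\le C_\alpha e^{l|\alpha||t|}$ is exactly what keeps $a\circ G_{t-s}$ in $S^\comp_{h^{\rho+\epsilon}}(\M)$ with $h$-uniform seminorms on $|t-s|\le\epsilon T_E$, so that the Calder\'on--Vaillancourt bound in Theorem \ref{thm:SDOL2} gives an $L^2$ operator-norm estimate on $\Op_h(r_{t-s})$.

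Two small remarks. First, the claim that the extra $|\log h|$ factor from integrating over $|s|\le\epsilon T_E$ is ``absorbed by slightly enlarging $\epsilon$'' is phrased loosely, and as written it goes the wrong direction (enlarging $\epsilon$ weakens the target bound, it does not strengthen the input). The honest reason the log factor is harmless is that the pointwise Moyal remainder is actually much smaller than the target: since $H$ is a fixed ($h$-independent) symbol, the first nonvanishing correction in the Moyal bracket of $H$ with $b\in S^\comp_{h^{\rho+\epsilon}}$ involves three derivatives of $b$, giving a per-unit-time bound $O(h^{2-3(\rho+\epsilon)})$; after multiplying by $T_E\sim|\log h|$ this is $o(h^{1-2\rho-2\epsilon})$ because $2-3(\rho+\epsilon)-(1-2\rho-2\epsilon)=1-\rho-\epsilon>1/2$. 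Second, there is a sign convention issue in the Poisson bracket (the paper's Section \ref{sec:Egorov} uses $\tfrac{d}{dt}(a\circ G_t)=\{H,a\}\circ G_t$, whereas with the Poisson-bracket formula it also states this equals $\{a\circ G_t,H\}=-\{H,a\circ G_t\}$); this does not affect the argument so long as one is consistent, but you should fix a convention so the exact cancellation in $F'(s)$ is visible. Apart from these bookkeeping points, which you explicitly flag as the technical content, the proposal is a correct reconstruction of the cited argument.
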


\subsection{Rate of ergodicity}
In a negatively curved manifold $(\M,g)$, the geodesic flow $G_t$ on $S^*\M$ is Anosov, hence is ergodic with respect to the Liouville measure $\mu_1$. See Anosov \cite{Ano}. Let $f\in L^2(S^*\M)$. Define the time-average of $f$ as
$$\Av_T(f)=\frac1T\int_0^Tf\circ G_t\,dt.$$
Then the von Neumann mean ergodic theorem states that 
$$\|\Av_T(f)-\mu_1(f)\|_{L^2(S^*\M)}=o_{f,T}(1)\quad\text{as }T\to\infty.$$
To get a more quantitative version of the above convergence, we need the exponential decay of correlations from Liverani \cite[Corollary 2.5]{Liv}:
\begin{thm}[Exponential decay of correlations]\label{thm:rom}
For each $\gamma\in(0,1)$, there exist $c,C>0$ depending on $\gamma$ such that for each $f,g\in C^\gamma(S^*\M)$,
\begin{equation}\label{eq:rom}
\left|\int_{S^*\M}f\,g\circ G_t\,d\mu_1-\mu_1(f)\mu_1(g)\right|\le C e^{-ct}\|f\|_\gamma\|g\|_\gamma. 
\end{equation}
\end{thm}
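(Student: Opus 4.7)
\bigskip

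\noindent\textbf{Proof proposal for Theorem \ref{thm:rom}.}

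The plan is to prove exponential decay of correlations via the spectral theory of the transfer operator, exploiting two structural features of the geodesic flow on a negatively curved manifold: its Anosov property \eqref{eq:Anosov} and the fact that it preserves the canonical contact form $\alpha = \xi\,dx$ on $S^*\M$. First I would reduce to mean-zero observables: replacing $f$ by $f-\mu_1(f)$ and $g$ by $g-\mu_1(g)$ absorbs the $\mu_1(f)\mu_1(g)$ term, so the task is to show $|\int_{S^*\M} f\,(g\circ G_t)\,d\mu_1|\le Ce^{-ct}\|f\|_\gamma\|g\|_\gamma$ when $\mu_1(f)=\mu_1(g)=0$.

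Next I would introduce a transfer (Ruelle--Perron--Frobenius) operator $\mathcal L_t$ adapted to $G_t$, acting on an anisotropic Banach space $\mathcal B^{\gamma}$ of distributions that are smooth along the unstable foliation and H\"older along the stable foliation (and hence naturally paired against $C^\gamma(S^*\M)$ observables). The Anosov splitting \eqref{eq:Anosov} ensures that $\mathcal L_t$ is well-defined on such spaces and has spectral radius $1$, with $1$ a simple eigenvalue corresponding to $\mu_1$. The correlation integral can then be rewritten as a dual pairing
\begin{equation*}
\int_{S^*\M} f\,(g\circ G_t)\,d\mu_1 = \langle f,\,\mathcal L_t g\rangle,
\end{equation*}
so exponential decay becomes equivalent to a spectral gap: $\|\mathcal L_t\!\upharpoonright_{(\mathcal B^\gamma)_0}\|\le Ce^{-ct}$, where the subscript denotes the codimension-one subspace of mean-zero elements.

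The main obstacle is establishing this spectral gap. For an Anosov diffeomorphism the gap follows relatively quickly (Lasota--Yorke inequality plus quasicompactness), but for a continuous-time flow one faces the notorious difficulty that the unstable/stable directions and the flow direction are all neutral in mixed norms. The way through it, which is really the heart of Liverani's argument, is to use the contact structure: the canonical contact form $\alpha$ on $S^*\M$ makes $G_t$ a \emph{contact Anosov flow}, and the non-integrability of $\ker\alpha$ supplies the oscillatory cancellation needed to run a Dolgopyat-type estimate on the twisted transfer operators arising from the Laplace transform of $\mathcal L_t$. Concretely, I would follow Dolgopyat's scheme: control $\mathcal L_t$ via Markov sections, Laplace-transform in $t$, and prove that the resulting family is analytic and uniformly bounded in a strip $\{\Re s > -c\}$ by exhibiting cancellations in the sum over long symbolic words, using the temporal distance function associated to $\alpha$.

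Once the spectral gap on $\mathcal B^\gamma$ is in hand, pairing against $g\in C^\gamma\hookrightarrow \mathcal B^\gamma$ and $f\in C^\gamma\subset(\mathcal B^\gamma)^*$ yields \eqref{eq:rom} directly, with constants $c,C$ depending only on $\gamma$, the hyperbolicity exponent $k$ from \eqref{eq:Anosov}, and geometric data of $\M$. I anticipate that the two delicate points will be (i) the construction of the anisotropic norm so that both the Lasota--Yorke inequality and the pairing with $C^\gamma$ observables are simultaneously valid, and (ii) the Dolgopyat cancellation estimate, where quantitative non-integrability of the contact distribution has to be converted into an explicit decay of the twisted transfer operator.
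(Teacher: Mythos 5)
The paper does not prove this theorem at all: it is quoted verbatim (after a change of notation) from Liverani \cite[Corollary 2.5]{Liv}, and the proof is entirely external. So the comparison is really between your sketch and Liverani's published argument.

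Your outline captures the correct large-scale architecture of that argument --- reduce to mean-zero observables, introduce a transfer operator on an anisotropic Banach space adapted to the stable/unstable splitting, establish a spectral gap, and use the contact form to produce the oscillatory cancellation in a Dolgopyat-type estimate. Where it goes astray is in the one sentence where you say you would ``follow Dolgopyat's scheme: control $\mathcal L_t$ via Markov sections \ldots by exhibiting cancellations in the sum over long symbolic words.'' That is Dolgopyat's 1998 method, not Liverani's, and the distinction is not cosmetic. The symbolic-dynamics route needs the stable/unstable foliations (and hence the temporal distance function) to be sufficiently regular --- essentially $C^1$ --- which for geodesic flows of variable negative curvature one only has in dimension $2$. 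In dimension $n\ge 3$ the foliations are in general merely H\"older, and the Markov-section argument does not close; this regularity obstruction is precisely what Liverani's functional-analytic framework was designed to bypass. Liverani builds the anisotropic norms directly on the manifold, proves a Lasota--Yorke inequality there, and runs the Dolgopyat oscillation estimate inside that Banach space without ever coding the flow, using only the smoothness of the contact form $\alpha$ (which is $C^\infty$) rather than the smoothness of the foliations.

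So if you literally carried out the program you wrote down, you would reprove exponential mixing for geodesic flows on negatively curved \emph{surfaces}, but you would hit a wall in higher dimensions --- which is exactly the setting the present paper needs, since Theorems \ref{thm:deltaQE}--\ref{thm:deltaQEinM} are stated for manifolds of arbitrary dimension $n$. To actually recover Theorem \ref{thm:rom} as stated you must drop the Markov sections and work as Liverani does: construct the anisotropic spaces $\mathcal B^{p,q}$ (integration against $C^q$ test functions along unstable leaves, $C^p$-differentiability transversally), verify that $C^\gamma(S^*\M)$ embeds into $\mathcal B^{p,q}$ and pairs continuously with it, prove the Lasota--Yorke estimate for the resolvent $R(s)=\int_0^\infty e^{-st}\mathcal L_t\,dt$, and then run the Dolgopyat argument on $R(s)$ in the strip $\Re s > -c$ using the uniform non-integrability supplied by the contact form. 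Your two flagged ``delicate points'' (i) and (ii) are indeed the heart of it; the missing insight is that both must be carried out \emph{without} Markov coding for the proof to cover the case $n\ge 3$.
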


Here, $\|f\|_\gamma$ is the H\"older norm of a function $f\in C^\gamma(S^*\M)$, the space of H\"older continuous functions on $S^*\M$. From this result we derive the following theorem.

\begin{thm}[Rate of ergodicity]\label{thm:roe}
Let $\gamma\in(0,1)$ and $f\in C^\gamma(S^*\M)$. There exist $C>0$ depending on $\gamma$ and $\M$ such that,
\begin{equation}\label{eq:roe}
\|\Av_T(f)-\mu_1(f)\|_{L^2(S^*\M)}\le\frac{C\|f\|_\gamma}{\sqrt T}.
\end{equation}
\end{thm}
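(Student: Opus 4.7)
\medskip

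\textbf{Proof proposal.} The plan is to compute the $L^2$ norm squared of $\Av_T(f) - \mu_1(f)$ by expanding the square and using the $G_t$-invariance of $\mu_1$ to reduce a double integral of two-point correlations to a single time integral, then feed in the exponential decay of correlations from Theorem \ref{thm:rom}. First I would reduce to the zero-mean case: setting $g = f - \mu_1(f)$, one has $\mu_1(g) = 0$, $\Av_T(g) = \Av_T(f) - \mu_1(f)$, and $\|g\|_\gamma \le 2\|f\|_\gamma$ (since $|\mu_1(f)| \le \|f\|_\infty$ and subtracting a constant leaves the H\"older seminorm unchanged). So it suffices to bound $\|\Av_T(g)\|_{L^2(S^*\M)}^2$ when $\mu_1(g) = 0$.

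Next I would expand
\begin{equation*}
\|\Av_T(g)\|_{L^2(S^*\M)}^2 = \frac{1}{T^2}\int_0^T\!\!\int_0^T \left(\int_{S^*\M} g\circ G_s\cdot \overline{g\circ G_t}\,d\mu_1\right) ds\,dt,
\end{equation*}
and use the $G_t$-invariance of $\mu_1$ to rewrite the inner integral as $\int_{S^*\M} g \cdot \overline{g}\circ G_{t-s}\,d\mu_1$. Since $\mu_1(g) = \mu_1(\overline g) = 0$, Theorem \ref{thm:rom} gives
\begin{equation*}
\left|\int_{S^*\M} g\cdot \overline{g}\circ G_{t-s}\,d\mu_1\right| \le C e^{-c|t-s|}\|g\|_\gamma^2,
\end{equation*}
where for $t-s<0$ I would apply the decay estimate to $(\overline g, g)$ at time $s-t$ after another application of invariance. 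Plugging this in,
\begin{equation*}
\|\Av_T(g)\|_{L^2(S^*\M)}^2 \le \frac{C\|g\|_\gamma^2}{T^2}\int_0^T\!\!\int_0^T e^{-c|t-s|}\,ds\,dt \le \frac{2C\|g\|_\gamma^2}{cT},
\end{equation*}
using the elementary bound $\int_0^T e^{-c|t-s|}\,ds \le 2/c$ uniformly in $t$. Taking square roots and absorbing the factor of $2$ from the reduction into the constant yields \eqref{eq:roe}.

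There is no serious obstacle: the whole argument is a standard variance estimate, and the only subtle points are (i) justifying the use of Theorem \ref{thm:rom} for negative times via invariance of $\mu_1$, and (ii) keeping track of the dependence on $\gamma$ in the constants, which is inherited directly from the constants $c,C$ in Theorem \ref{thm:rom}. The bound $O(T^{-1/2})$ is the expected diffusive rate produced by summability of correlations, and any polynomially-decaying (in fact, integrable) correlation bound would suffice; the exponential decay is much stronger than required for this particular statement, but will become essential later in the paper when $T$ is taken to grow like $|\log h|$ and one needs correlations to be controlled up to the Ehrenfest time.
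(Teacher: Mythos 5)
Your proposal is correct and follows essentially the same route as the paper: reduce to the zero-mean case, expand $\|\Av_T\|_{L^2}^2$ as a double time integral of correlations, invoke the invariance of $\mu_1$ to collapse it to a single time difference $t-s$, apply Liverani's exponential decay of correlations (Theorem \ref{thm:rom}), and bound $\int_0^T\int_0^T e^{-c|t-s|}\,dt\,ds \le 2T/c$. The paper simply assumes $\mu_1(f)=0$ without loss of generality and does not comment on the $t-s<0$ case, so your explicit reduction and your remark about applying the decay estimate at time $s-t$ via invariance are minor refinements of the same argument rather than a different approach.
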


\begin{rmk}
The left-hand side of \eqref{eq:roe} is the variance in the central limiting theorem (see Zelditch \cite{Ze2}). So the above theorem asserts that the variance tends to zero in a polynomial rate of time $T^{-1/2}$ for H\"older continuous functions.
\end{rmk}

\begin{proof}[Proof of Theorem \ref{thm:roe}]
Without loss of generality, we may assume that $\mu_1(f)=0$. Compute that
\begin{eqnarray*}
&&\|\Av_T(f)\|_{L^2(S^*\M)}^2\\
&=&\int_{S^*\M}\left(\frac1T\int_0^Tf\circ G_t\,dt\right)\left(\frac1T\int_0^T\ol f\circ G_s\,ds\right)\,d\mu_1\\
&=&\frac{1}{T^2}\int_0^T\int_0^T\int_{S^*\M}f\circ G_t\,\ol f\circ G_s\,d\mu_1\,dt\,ds\\
&=&\frac{1}{T^2}\int_0^T\int_0^T\int_{S^*\M}\left(f\circ G_{t}\,\ol f\circ G_{s}\right)\circ G_{-s}\,d\mu_1\,dt\,ds\quad\text{since $G_{-s}$ is symplectic}\\
&=&\frac{1}{T^2}\int_0^T\int_0^T\int_{S^*\M}f\circ G_{t-s}\,\ol f\,d\mu_1\,dt\,ds\\
&\le&\frac{C\|f\|^2_\gamma}{T^2}\int_0^T\int_0^Te^{-c|t-s|}\,dt\,ds\quad\quad\text{by Theorem \ref{thm:rom}}\\
&\le&\frac{C\|f\|^2_\gamma}{T},
\end{eqnarray*}
and the theorem follows.
\end{proof}

\section{small scale quantum ergodicity: Proofs of Theorem \ref{thm:deltaQE}, \ref{thm:deltaQEdeltamicrosymbol}, and \ref{thm:deltaQEinM}}\label{sec:deltaQE}
In this section, we prove small scale quantum ergodicity. 

\subsection{Proof of Theorem \ref{thm:deltaQE}} 
Let $\delta(h)$ be $\rho$-admissible for some $\rho\in[0,1/2)$.

\noindent\textbf{Step 1}. Given $a\in S_{\delta(h)}(\M)$, we first assume that 
$$\text{$a$ is compactly supported in $\mathcal E$ and $\mu_1(a)=0$}.$$
Recall that $\mathcal E=\{1/2\le|\xi|_x\le3/2\}$. Then $a\in S^\comp_{h^\rho}(\M)$ so the results in Section \ref{sec:Ehrenfest} apply. That is, $a\circ G_t\in S^\comp_{\rho+\epsilon}(\M)$ if $\epsilon\in[0,1/2-\rho)$ and $|t|\le\epsilon T_E$. Therefore, the time average $\Av_T(a)\in S^\comp_{\rho+\epsilon}(\M)$ for $|T|\le\epsilon T_E$.

Next, we see that for any $\gamma\in(0,1)$,
$$\|a\|_{\gamma}\le C\delta(h)^{-\gamma},$$
where $C$ depends on finite number of seminorms of $a$. Then the rate of ergodicity in Theorem \ref{thm:roe} implies that
$$\|\Av_T(a)\|_{L^2(S^*\M)}\le\frac{C\|a\|_{\gamma}}{\sqrt T}\le\frac{C\delta(h)^{-\gamma}}{\sqrt T}.$$
We set $T=\epsilon T_E$. The above inequality continues as
\begin{equation}\label{eq:roedeltasymbol}
\|\Av_{\epsilon T_E}(a)\|_{L^2(S^*\M)}\le C\frac{\delta(h)^{-\gamma}}{\sqrt{\epsilon|\log h|/l}}\le C\sqrt{\frac{l}{\epsilon}}\frac{\delta(h)^{-\gamma}}{\sqrt{|\log h|}}.
\end{equation}
For the semiclassical pseudodifferential operator $\Op_h(a)$, we define
$$\Av^U_T(\Op_h(a))=\frac1T\int_0^TU(-t)\Op_h(a)U(t)\,dt.$$
To evaluate $V_2(h,a)$ in Theorem \ref{thm:deltaQE}, note that 
$$U(t)u_j=e^{-ith\Delta}u_j=e^{-itE_j/h}u_j,$$
then
$$\big\langle\Av^U_T(\Op_h(a))u_j,u_j\big\rangle=\big\langle\Op_h(a)u_j,u_j\big\rangle\quad\text{for all }t\in\R.$$
Compute that
\begin{eqnarray}
&&V_2(h,a)\nonumber\\
&=&h^{n-1}\sum_{E_j\in[1,1+h]}\Big|\big\langle\Op_h(a)u_j,u_j\big\rangle\Big|^2\nonumber\\
&=&h^{n-1}\sum_{E_j\in[1,1+h]}\Big|\big\langle\Av^U_T(\Op_h(a))u_j,u_j\big\rangle\Big|^2\nonumber\\
&\le&h^{n-1}\sum_{E_j\in[1,1+h]}\big\langle\Av^U_T(\Op_h(a))^\star\,\Av^U_T(\Op_h(a))u_j,u_j\big\rangle\label{eq:V2ha}
\end{eqnarray}
by Cauchy-Schwarz inequality, where we drop the absolute value sign since each term in the last summation is non-negative. 

Since $\Av_T(a)\in S^\comp_{\rho+\epsilon}(\M)$ for $\epsilon\in[0,1/2-\rho)$ and $|t|\le\epsilon T_E$, using Egorov's theorem until Ehrenfest time in Theorem \ref{thm:Egorov}, 
\begin{eqnarray*}
&&\Big\|\Av^U_{\epsilon T_E}(\Op_h(a))-\Op_h(\Av_{\epsilon T_E}(a))\Big\|_{L^2(\M)\to L^2(\M)}\\
&\le&\sup_{|t|\le\epsilon T_E}\Big\|U(-t)\circ\Op_h(a)\circ U(t)-\Op_h(a\circ G_t)\Big\|_{L^2(\M)\to L^2(\M)}\\
&=&O(h^{1-2\rho-2\epsilon}).
\end{eqnarray*}
Together with the similar estimate on the adjoint, $\Av_T(\Op_h(a))^\star$, we have
\begin{eqnarray*}
&&\Big\|\Av^U_{\epsilon T_E}(\Op_h(a))^\star\,\Av^U_{\epsilon T_E}(\Op_h(a))-\Op_h(\ol{\Av_{\epsilon T_E}(a}))\Op_h(\Av_{\epsilon T_E}(a))\Big\|_{L^2(\M)\to L^2(\M)}\\
&=&O(h^{1-2\rho-2\epsilon}),
\end{eqnarray*}
using the $L^2$ boundedness in Theorem \ref{thm:SDOL2} of operators in $\Psi_{h^\rho}(\M)$. We see that using \eqref{eq:SDOproduct}
$$\Op_h(\ol{\Av_{\epsilon T_E}(a)})\Op_h(\Av_{\epsilon T_E}(a))\in\Psi_{\rho+\epsilon}^\comp(\M)$$ 
with principal symbol 
$$|\Av_{\epsilon T_E}(a)|^2\in S^\comp_{\rho+\epsilon}(\M).$$
Continue to estimate $V_2(h,a)$. By Weyl's law (c.f. Duistermaat and Guillemin \cite{DuGu}) that $\#\{E_j\in[1,1+h]\}=O(h^{1-n})$, we have that
\begin{eqnarray*}
\eqref{eq:V2ha}&\le&h^{n-1}\sum_{E_j\in[1,1+h]}\big\langle\Av^U_T(\Op_h(a))^\star\,\Av^U_T(\Op_h(a))u_j,u_j\big\rangle\\
&=&h^{n-1}\sum_{E_j\in[1,1+h]}\big\langle\Op_h(\ol{\Av_{\epsilon T_E}(a}))\Op_h(\Av_{\epsilon T_E}(a))u_j,u_j\big\rangle+O(h^{1-2\rho-2\epsilon})\\
&\le&h^{n-1}\sum_j\psi\left(\frac{1-E_j}{h}\right)\big\langle\Op_h(\ol{\Av_{\epsilon T_E}(a}))\Op_h(\Av_{\epsilon T_E}(a))u_j,u_j\big\rangle+O(h^{1-2\rho-2\epsilon}).
\end{eqnarray*}
Here, we choose the positive function $\psi$ such that $\hat\psi\in C^\infty_0(\R)$ and $\psi\ge1$ in $[-1,0]$ so
$$\psi\left(\frac{1-E_j}{h}\right)=1\quad\text{when }E_j\in[1,1+h];$$
moreover, we can always assume that $\supp\hat\psi$ contains no periods of $G_t$ (since $\M$ has injectivity radius much greater than $1$). 

Then applying semiclassical trace formula Proposition \ref{prop:STF}, we have
\begin{eqnarray*}
\eqref{eq:V2ha}&=&h^{n-1}\sum_j\psi\left(\frac{1-E_j}{h}\right)\big\langle\Op_h(\ol{\Av_{\epsilon T_E}(a}))\Op_h(\Av_{\epsilon T_E}(a))u_j,u_j\big\rangle+O(h^{1-2\rho-2\epsilon})\\
&=&\mu_1\left(|\Av_{\epsilon T_E}(a)|^2\right)+Ch\left\||\Av_{\epsilon T_E}(a)|^2\right\|_{C^{2n+8}}+O(h^{1-2\rho-2\epsilon})\\
&=&\mu_1\left(|\Av_{\epsilon T_E}(a)|^2\right)+O\left(h^{1-2(2n+8)(\rho+\epsilon)}\right).
\end{eqnarray*}
From \eqref{eq:roedeltasymbol} we have
\begin{equation}\label{eq:spaceaverage}
\mu_1\big(|\Av_{\epsilon T_E}(a)|^2\big)=\|\Av_{\epsilon T_E}(a)\|_{L^2(S^*\M)}^2\le C\frac{l}{\epsilon}\frac{\delta(h)^{-2\gamma}}{|\log h|}.
\end{equation}
The theorem evidently follows when $\alpha=0$ and $\beta=1$. If
$$\alpha>0\quad\text{and}\quad\delta(h)=|\log h|^{-\alpha},$$
then $a\in S^\comp_{\delta(h)}(\M)\subset S^\comp_{h^\rho}(\M)$ for all $\rho\in(0,1/2)$. We therefore derive that
$$V_2(h,a)=C\mu_1\big(|\Av_{\epsilon T_E}(a)|^2\big)+O\left(h^{1-2(2n+8)(\rho+\epsilon)}\right)=C|\log h|^{2\alpha\gamma-1}\quad\text{as }h\to0$$
by picking $\rho$ and $\epsilon$ small enough. Notice that the rate of ergodicity in Theorem \ref{thm:roe} is valid for all $\gamma\in(0,1)$. Hence, for
$$0\le\beta<1,$$
one can find $\gamma\in(0,1)$ such that
$$\beta\le1-2\alpha\gamma<1.$$
Therefore,
$$|\log h|^{2\alpha\gamma-1}=O(|\log h|^{-\beta}).$$

\noindent\textbf{Step 2}. Let $a\in S_{\delta(h)}(\M)$, where $\delta(h)=|\log h|^{-\alpha}$ for $\alpha\ge0$. We have that $a\in S_{h^\rho}(\M)$ for all $\rho\in(0,1/2)$. Consider the symbol 
$$\tilde a=(a-\mu_1(a))\phi(|\xi|_x).$$ 
Then $\tilde a$ is compactly supported in the energy shell $\mathcal E$ since $\supp\phi\subset(-1/2,1/2)$; moreover
$$\mu_1(\tilde a)=\int_{S^*\M}(a-\mu_1(a))\phi(|\xi|_x)\,d\mu_1=\int_{S^*\M}a\,d\mu_1-\mu_1(a)=0,$$
because $\phi(|\xi|_x)=1$ near $S^*\M$. So Theorem \ref{thm:deltaQE} applies to the symbol $\tilde a$ by the previous step:
$$V_2(h,\tilde a)=h^{n-1}\sum_{E_j\in[1,1+h]}\Big|\big\langle\Op_h(\tilde a)u_j,u_j\big\rangle\Big|^2=O\left(|\log h|^{-\beta}\right)\quad\text{as }h\to0.$$
To prove Theorem \ref{thm:deltaQE} for the symbol $a$, note the difference
\begin{eqnarray*}
&&\left|\big\langle\Op_h(\tilde a)u_j,u_j\big\rangle-\left(\big\langle\Op_h(a)u_j,u_j\big\rangle-\mu_1(a)\right)\right|\\
&=&\left|\Big\langle\Op_h\big((a-\mu_1(a))\phi(|\xi|_x)\big)u_j,u_j\Big\rangle\right|\\
&\le&\left\|\Op_h\big((a-\mu_1(a))(\phi(|\xi|_x)-1)\big)u_j\right\|_{L^2(\M)}:=\|Qu_j\|_{L^2(\M)},
\end{eqnarray*}
in which the semiclassical pseudodifferential operator $Q$ has symbol
$$q=(a-\mu_1(a))(\phi(|\xi|_x)-1)\in S_{h^\rho}(\M).$$
We see that $q$ is supported in $T^*\M\setminus\mathcal E$. We also know that the symbol of $h^2\Delta-E_j$, $p=|\xi|^2_x-E_j$, when $E_j\in[1,1+h]$ is elliptic in $T^*\M\setminus\mathcal E$, that is, $|\xi|^2_x-E_j\ge c>0$. Let $r=q/p$ and $R=\Op_h(r)$. Then $r\in S_{h^\rho}(\M)$ and by \eqref{eq:SDOproduct}
$$Q=RP+E,\quad\text{where }E\in h^{1-2\rho}S_{h^\rho}(\M).$$
Note that the operators in the above equation depend on the energy $E_j$. Since $Pu_j=0$, we have from Theorem \ref{thm:SDOL2}
$$\|Qu_j\|_{L^2(\M)}\le\|RPu_j\|_{L^2(\M)}+\|Eu_j\|_{L^2(\M)}=O(h^{1-2\rho}).$$
Now we compute that
\begin{eqnarray*}
&&V_2(h,a)\\
&=&h^{n-1}\sum_{E_j\in[1,1+h]}\Big|\big\langle\Op_h(a)u_j,u_j\big\rangle-\mu_1(a)\Big|^2\\
&\le&h^{n-1}\sum_{E_j\in[1,1+h]}\left(\left|\big\langle\Op_h(a)u_j,u_j\big\rangle-\mu_1(a)-\big\langle\Op_h(\tilde a)u_j,u_j\big\rangle\right|+\left|\big\langle\Op_h(\tilde a)u_j,u_j\big\rangle\right|\right)^2\\
&\le&h^{n-1}\sum_{E_j\in[1,1+h]}\left(\left|\big\langle\Op_h(\tilde a)u_j,u_j\big\rangle\right|+O(h^{1-2\rho})\right)^2\\
&=&O\left(|\log h|^{-\beta}\right)\quad\text{as }h\to0.
\end{eqnarray*}

\subsection{Proof of Theorem \ref{thm:deltaQEdeltamicrosymbol}}
Notice that for the symbol $a^b_{z_0}$ defined in \eqref{eq:deltasymbol},
\begin{equation}\label{eq:abz0Holder}
\|a^b_{z_0}\|_{\gamma}\le C\delta(h)^{-\gamma},
\end{equation}
where $C$ depends on $b$, $\phi$, and $\M$, and is uniform in $z_0\in S^*\M$.

Following the argument in the previous subsection, 
$$V_2\big(h,a^b_{z_0}\big)\le C\frac{\delta(h)^{-2\gamma}}{|\log h|}.$$
The theorem evidently follows when $\alpha=0$ and $\beta=1$. If
$$0<\alpha<\frac{1}{2n-1}\quad\text{and}\quad\delta(h)=|\log h|^{-\alpha},$$
for
$$0\le\beta<1-2\alpha(2n-1),$$
one can find $\gamma\in(0,1)$ such that
$$\beta\le1-2\alpha(2n+\gamma-1),$$
so
$$\delta(h)^{2(2n-1)}\,|\log h|^{-\beta}\ge|\log h|^{2\alpha\gamma-1}=\frac{\delta(h)^{-2\gamma}}{|\log h|}.$$
Therefore,
$$V_2\big(h,a^b_{z_0}\big)\le C\frac{\delta(h)^{-2\gamma}}{|\log h|}=C\,\delta(h)^{2(2n-1)}|\log h|^{-\beta}.$$ 
Notice that \eqref{eq:abz0Holder} is uniform for $z_0\in S^*\M$. The theorem thus follows since $C$ depends on $\M$ and finite number of derivatives of $b$ and $\phi$. 

\subsection{Proof of Theorem \ref{thm:deltaQEinM}}
The theorem follows by noticing that $a^b_{x_0}$ defined in \eqref{eq:deltasymbol},
\begin{equation}\label{eq:abx0Holder}
\|a^b_{x_0}\|_{\gamma}\le C\delta(h)^{-\gamma},
\end{equation}
where $C$ depends on $b$, $\phi$, and $\M$, and is uniform in $x_0\in\M$. Therefore, when $\alpha$ and $\beta$ in the required ranges, we have that
$$V_2\big(h,a^b_{x_0}\big)\le C\frac{\delta(h)^{-2\gamma}}{|\log h|}=C\,\delta(h)^{2n}|\log h|^{-\beta}.$$ 
Notice that \eqref{eq:abx0Holder} is uniform for $x_0\in\M$. The theorem thus follows since $C$ depends on $\M$ and finite number of derivatives of $b$ and $\phi$.

\section{Small scale mass equidistribution: Proofs of Corollaries \ref{cor:deltaequidatx} and \ref{cor:deltaequidinM}}\label{sec:deltaequidinM}
In this section, we prove Corollaries \ref{cor:deltaequidatx} and \ref{cor:deltaequidinM}. Then we discuss some potential improvements on small scale quantum ergodicity.

\subsection{Proof of Corollary \ref{cor:deltaequidatx}}
We keep the notations that
$$h=\lambda^{-1},\quad h^{-2}E_j=\lambda^2_j,\quad\text{and}\quad\delta(h)=r(\lambda)=(\log\lambda)^{-\beta}$$
such that \eqref{eq:deltaQEinM} holds. Then
$$\Delta u_j=h^{-2}E_ju_j=\lambda^2_ju_j.$$
We use the local coordinates induced by exponential map $\Exp_{x_0}:T_{x_0}\M\to\M$. From the assumption that the injectivity radius of $\M$ is much greater than $1$, $f=\Exp_{x_0}|_{D(0,2)}$ defines a local coordinate system around $x_0$, where $D(0,2)$ is the ball centered at $0$ with radius $2$ in $\R^n$. We still use $\tilde x$ as the the local coordinate of $x$ when $x\in B(x_0,2)$. So $\tilde x_0=0$.

Given a function $b\in C^\infty_0(D(0,2))$, consider the symbol
$$a^b_{x_0}(x,\xi;h)=b\left(\frac{\tilde x}{\delta(h)}\right)\phi\left(|\tilde\xi|_x\right).$$
We have
$$\left\langle\Op_h\left(a^b_{x_0}\right)u_j,u_j\right\rangle=\int_{D(0,2)}b\left(\frac{\tilde x}{\delta(h)}\right)\left|u_j\circ f(\tilde x)\right|^2|Df(\tilde x)|\,d\tilde x+O(h),$$
in which the $O(h)$ term comes from removing the frequency cutoff $\phi$; and
$$\mu_1\left(a^b_{x_0}\right)=\int_{S^*B(0,2)}a^b_{x_0}(x,\xi;h)\,d\mu_1=\frac{1}{\Vol(\M)}\int_{D(0,2)}b\left(\frac{\tilde x}{r(\lambda)}\right)|Df(\tilde x)|\,d\tilde x.$$
Putting them together with \eqref{eq:deltaQEinM}, we have
\begin{eqnarray*}
&&\frac{1}{\#\{\lambda_j\in[\lambda,\sqrt{\lambda^2+\lambda}]\}}\times\\
&&\sum_{\lambda_j\in[\lambda,\sqrt{\lambda^2+\lambda}]}\left|\int_{D(0,2)}b\left(\frac{\tilde x}{r(\lambda)}\right)\left|u_j\circ f(\tilde x)\right|^2|Df(\tilde x)|\,d\tilde x-\frac{1}{\Vol(\M)}\int_{D(0,2)}b\left(\frac{\tilde x}{r(\lambda_j)}\right)|Df(\tilde x)|\,d\tilde x\right|^2\\
&\le&C\,r(\lambda)^{2n}|\log\lambda|^{-\beta},
\end{eqnarray*}
where $C$ depends on $\M$ and finite number of derivatives of $b$ and $\phi$ so the above quantity is bounded by
$$C_{\M,\phi}\|b\|_{C^m}\,r(\lambda)^{2n}|\log\lambda|^{-\beta},$$
where $m>0$ is a sufficiently large integer and $C_{\M,\phi}$ now only depends on $\M$ and $\phi$.

Thus a standard argument (c.f. \cite[Lemma 3]{Ze3}) gives a full density subsequence $\{u_{j_k}\}\subset\{u_j\}$ such that
\begin{eqnarray}
&&\left|\int_{D(0,2)}b\left(\frac{\tilde x}{r(\lambda_{j_k})}\right)|u_{j_k}\circ f(\tilde x)|^2|Df(\tilde x)|\,d\tilde x-\frac{1}{\Vol(\M)}\int_{D(0,2)}b\left(\frac{\tilde x}{r(\lambda_{j_k})}\right)|Df(\tilde x)|\,d\tilde x\right|\nonumber\\
&\le&C_{\M,\phi}\|b\|_{C^m}\,r(\lambda_{j_k})^n|\log\lambda_{j_k}|^{-\tilde\beta}\quad\text{as }k\to\infty\label{eq:1densityb}
\end{eqnarray}
for some $0<\tilde\beta<\beta$. 

For $i\in\mathbb Z$ and $|i|\ge10$, define $b_i\subset C^\infty_0(D(0,2))$ such that $0\le b_i\le1$ and
$$b_i(\tilde x)=\begin{cases}
1 & \text{for }\tilde x\in D(0,1+1/i);\\
0 & \text{for }\tilde x\in D(0,2)\setminus D(0,1+2/i).
\end{cases}$$
For each $b_i$, there is a full density subsequence $\{u^{(i)}_{j_k}\}$ depending on $b_i$ such that \eqref{eq:1densityb} holds. Using a diagonal argument (c.f. \cite[Theorem 15.5]{Zw}), we can find a full density subsequence $\{u_{j_k}\}\subset\{u_j\}$ such that
\begin{eqnarray*}
&&\left|\int_{D(0,2)}b_i\left(\frac{\tilde x}{r(\lambda_{j_k})}\right)|u_{j_k}\circ f(\tilde x)|^2|Df(\tilde x)|\,d\tilde x-\frac{1}{\Vol(\M)}\int_{D(0,2)}b_i\left(\frac{\tilde x}{r(\lambda_{j_k})}\right)|Df(\tilde x)|\,d\tilde x\right|\\
&\le&C_{\M,\phi}\|b_i\|_{C^m}\,r(\lambda_{j_k})^n|\log\lambda_{j_k}|^{-\tilde\beta}\quad\text{as }k\to\infty
\end{eqnarray*}
for all $b_i$. Denote $r_{j_k}=r(\lambda_{j_k})$.  Using the above inequality, we have that for $i\ge10$
\begin{eqnarray*}
&&\int_{B(x_0,r_{j_k})}|u_{j_k}|^2\,d\Vol\\
&=&\int_{D(0,r_{j_k})}|u_{j_k}\circ f(\tilde x)|^2|Df(\tilde x)|\,d\tilde x\\
&\le&\int_{D(0,2)}b_i\left(\frac{\tilde x}{r_{j_k}}\right)|u_{j_k}\circ f(\tilde x)|^2|Df(\tilde x)|\,d\tilde x\\
&\le&\frac{1}{\Vol(\M)}\int_{D(0,2)}b_i\left(\frac{\tilde x}{r_{j_k}}\right)|Df(\tilde x)|\,d\tilde x+C_{\M,\phi}\|b_i\|_{C^m}\,r_{j_k}^n|\log\lambda_{j_k}|^{-\tilde\beta}\\
&\le&\frac{\Vol(B(x_0,r_{j_k}))}{\Vol(\M)}+C_\M r_{j_k}^n/i+C\|b_i\|_{C^m}\,r_{j_k}^n|\log\lambda_{j_k}|^{-\tilde\beta}.
\end{eqnarray*}
The last inequality follows from the fact that
$$\int_{D(0,2)\setminus D(0,1)}b_i\left(\frac{\tilde x}{r_{j_k}}\right)|Df(\tilde x)|\,d\tilde x\le\int_{D(0,1+2/i)\setminus D(0,1)}|Df(\tilde x)|\,d\tilde x\le C_\M r_{j_k}^n/i,$$
where $C_\M$ depends only on $\M$. We know that
$$\|b_i\|_{C^m}\le c\,i^m,\quad\text{where $c$ is an absolute constant.}$$
So
\begin{eqnarray*}
&&\int_{B(x_0,r_{j_k})}|u_{j_k}|^2\,d\Vol\\
&\le&\frac{\Vol(B(x_0,r_{j_k}))}{\Vol(\M)}+C_\M r_{j_k}^n/i+C_{\M,\phi}\|b_i\|_{C^m}\,r_{j_k}^n|\log\lambda_{j_k}|^{-\tilde\beta}\\
&\le&\frac{\Vol(B(x_0,r_{j_k}))}{\Vol(\M)}+C_{\M,\phi}r_{j_k}^n\left(i^{-1}+i^m|\log\lambda_{j_k}|^{-\tilde\beta}\right).\\
\end{eqnarray*}
Letting 
$$i=\left[\left|\log\lambda_{j_k}\right|^{\frac{\tilde\beta}{m+1}}\right]$$
gives that
$$\int_{B(x_0,r_{j_k})}|u_{j_k}|^2\,d\Vol\le\frac{\Vol(B(x_0,r_{j_k}))}{\Vol(\M)}+o(r_{j_k}^n).$$

Similarly, for $i\le-10$ choose appropriate $b_i$. We can show that
$$\int_{B(x_0,r_{j_k})}|u_{j_k}|^2\,d\Vol\ge\frac{\Vol(B(x_0,r_{j_k}))}{\Vol(\M)}+o(r_{j_k}^n).$$
Therefore, we prove \eqref{eq:deltaequidatx} for this fixed point $x_0\in\M$.

\subsection{Proof of Corollary \ref{cor:deltaequidinM}}\hfill\\
\noindent\textbf{Step 1}. First we need a covering lemma that is similar to Colding and Minicozzi \cite[Lemma 2]{CM}. For readers' convenience, we provide a proof here.
\begin{lemma}\label{lemma:covering}
Let $r>0$ be sufficiently small. Then there exists a family of geodesic balls that covers $\M$:
$$\{B(x_i,r)\}_{i=1}^{N}\supset\M\quad\text{with }N\le c_1r^{-n};$$ 
furthermore, the following statements are valid.
\begin{enumerate}[(1).]
\item Each ball $B(x,r)\subset\M$ is covered by at most $c_2$ balls among $\{B(x_i,r)\}_{i=1}^{N}$:
$$B(x,r)\subset\bigcup_{m=1}^{c_2}B(x_{i_m},r)\quad\text{ for some }\{i_1,...,i_{c_2}\}\subset\{1,...,N\}.$$

\item Each ball $B(x,r)\subset\M$ contains at least one ball among $\{B(x_i,r/3)\}_{i=1}^{N}$:
$$B(x,r)\supset B(x_i,r/3)\quad\text{ for some }i\in\{1,...,N\}.$$
\end{enumerate}
Here, the positive constants $c_1$ and $c_2$ depend only on $\M$.
\end{lemma}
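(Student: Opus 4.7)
The plan is to run a standard Vitali-type maximal packing argument on the compact Riemannian manifold $\M$ and then read off the three conclusions from uniform volume comparability at small scales. Since $\M$ is compact and smooth with injectivity radius much greater than $1$, there exist constants $c_-,c_+>0$ and $r_0>0$ (depending only on $\M$) such that $c_-s^n\le\Vol(B(y,s))\le c_+s^n$ for every $y\in\M$ and every $s\le r_0$. I will use this two-sided bound as the sole geometric input.

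First I would choose $\{x_1,\dots,x_N\}\subset\M$ to be a maximal set with the property that the small balls $\{B(x_i,r/3)\}_{i=1}^N$ are pairwise disjoint; equivalently, $d(x_i,x_j)\ge 2r/3$ for $i\ne j$. By compactness of $\M$ such a maximal set is finite. By maximality, for every $x\in\M$ there must exist some $x_i$ with $d(x,x_i)<2r/3$, for otherwise one could enlarge the packing by adjoining $x$. Two consequences are immediate: first, $x\in B(x_i,2r/3)\subset B(x_i,r)$, so $\{B(x_i,r)\}_{i=1}^N$ covers $\M$; second, for any ball $B(x,r)\subset\M$, taking the corresponding $x_i$ and using $d(x,x_i)+r/3<r$, the triangle inequality yields $B(x_i,r/3)\subset B(x,r)$, which is exactly statement $(2)$.

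For the cardinality bound, disjointness of the $B(x_i,r/3)$ together with the volume comparison gives
$$N\cdot c_-(r/3)^n\le\sum_{i=1}^N\Vol(B(x_i,r/3))\le\Vol(\M),$$
so $N\le c_1 r^{-n}$ with $c_1=3^n\Vol(\M)/c_-$. For the bounded multiplicity in $(1)$, observe that if $B(x,r)\cap B(x_i,r)\ne\emptyset$ then $d(x,x_i)<2r$, whence $B(x_i,r/3)\subset B(x,7r/3)$. The small balls corresponding to such indices remain pairwise disjoint, so their total volume is at most $\Vol(B(x,7r/3))\le c_+(7r/3)^n$, and therefore the number of such indices is at most $c_+(7r/3)^n/c_-(r/3)^n=c_2$, a bound independent of $x$ and $r$.

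No step presents a serious obstacle; the entire argument is elementary once the uniform volume comparison is in hand. The one place where one must be mildly careful is precisely that comparison: we need it to hold simultaneously as a lower bound (to control $N$ and the multiplicity) and with the same-order upper bound for enclosing balls, uniformly in the base point. On a compact smooth Riemannian manifold with positive injectivity radius this is standard (e.g.\ from the Rauch comparison theorem or a direct computation in normal coordinates), and it suffices to restrict $r$ to lie below a small threshold depending only on $(\M,g)$.
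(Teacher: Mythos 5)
Your proof is correct and follows essentially the same Vitali-type maximal-packing argument as the paper: choose a maximal family of disjoint balls of radius $r/3$, read off the covering, the count, the bounded multiplicity, and the inclusion from maximality plus volume comparison. The only cosmetic difference is that you invoke an explicit two-sided bound $c_- s^n \le \Vol(B(y,s)) \le c_+ s^n$ whereas the paper phrases the same step via the doubling inequality $\Vol(B(x_i,5r)) \le C_\M \Vol(B(x_i,r/3))$; these are interchangeable at small scale on a compact manifold.
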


\begin{proof}[Proof of Lemma \ref{lemma:covering}]
Select a maximal disjoint family of balls $\{B(x_i,r/3)\}_{i=1}^{N}$ in $\M$. Then 
$$\sum_{i=1}^N\Vol(B(x_i,r/3))\le\Vol(\M).$$
So $N\le c_1r^{-n}$, where $c_1>0$ depends only on $\M$. It follows immediately from maximality that the doubled balls $B(x_i,2r/3)$, $i=1,...,N$, cover $\M$. Therefore, the family $\{B(x_i,r)\}_{i=1}^{N}$ is also a cover of $\M$.

Given any $x\in\M$, suppose that 
$$B(x,r)\subset\bigcup_{i=1}^{k}B(x_i,r)\quad\text{and}\quad B(x,r)\cap B(x_i,r)\ne\emptyset\quad\text{for all }i=1,...,k.$$
Since the disjoint balls $B(x_i,r/3)$, $i=1,...,k$, are all contained in $B(x,3r)$,
$$\sum_{i=1}^k\Vol(B(x_i,r/3))\le\Vol(B(x,3r));$$
but for each $i=1,...,k$, $B(x,3r)\subset B(x_i,5r)$ so
$$\Vol(B(x,3r))\le\Vol(B(x_i,5r))\le C_\M\Vol(B(x_i,r/3)).$$
Combing the above two inequalities, we have that $k\le c_2$, where $c_2$ depends only on $\M$. This shows (1) in the lemma.

Given any $x\in\M$, suppose that 
$$x\in B(x_i,2r/3)\quad\text{for some }i=1,...,N.$$
Then 
$$B(x_i,r/3)\subset B(x,r),$$
which proves (2) in the lemma.
\end{proof}

\noindent\textbf{Step 2}. For $0\le\alpha<1/(2n)$, take $r=\delta(h)=|\log h|^{-\alpha}$ in Lemma \ref{lemma:covering}; let $\{x_i\}_{i=1}^{N(h)}$ be the centers of the balls selected in Step 1. So $N(h)\le c_1r^{-n}=c_1|\log h|^{\alpha n}$. From Theorem \ref{thm:deltaQEinM}, 
$$V_2\big(h,a^b_{x_i}\big)=h^{n-1}\sum_{E_j\in[1,1+h]}\Big|\big\langle\Op_h(a^b_{x_i})u_j,u_j\big\rangle-\mu_1\big(a^b_{x_i}\big)\Big|^2=C\,\delta(h)^{2n}\,O(|\log h|^{-\beta})$$
as $h\to0$. Here, $C$ depends on the base function $b$, $\phi$, and $\M$, and is uniform for $x_i$. 

For $\tilde\beta>0$, write 
$$\Lambda^b_{x_i}(h)=\left\{j:E_j\in[1,1+h]\text{ and }\Big|\big\langle\Op_h(a^b_{x_i})u_j,u_j\big\rangle-\mu_1\big(a^b_{x_i}\big)\Big|\ge\delta(h)^n|\log h|^{-\tilde\beta}\right\}.$$
Hence,
$$h^{n-1}\#\Lambda^b_{x_i}(h)\le C|\log h|^{2\tilde\beta-\beta}.$$
Let 
$$\Lambda^b(h)=\bigcup_{i=1}^{N(h)}\Lambda^b_{x_i}(h).$$
Then
$$h^{n-1}\#\Lambda^b(h)\le N(h)C|\log h|^{2\tilde\beta-\beta}\le C|\log h|^{\alpha n+2\tilde\beta-\beta}.$$
Write
$$\Gamma^b(h)=\{j:E_j\in[1,1+h]\}\setminus\Lambda^b(h).$$ 
Thus, if $\alpha n+2\tilde\beta-\beta<0$, then
$$\frac{\#\Gamma^b(h)}{\#\{j:E_j\in[1,1+h]\}}=1-\frac{\#\Lambda^b(h)}{\#\{j:E_j\in[1,1+h]\}}\le1-C|\log h|^{\alpha n+2\tilde\beta-\beta}\to1$$
as $h\to0$ by Weyl's law.  Note that if $\beta-\alpha n>0$, then there exists $\tilde\beta>0$ such that
$$\alpha n+2\tilde\beta-\beta<0.$$
But this just means that $\alpha<1/(3n)$ since $0\le\beta<1-2\alpha n$. Note the existence of $\tilde\beta$ here requires $\alpha$ to be in a small range $[0,1/(3n))$ than Corollary \ref{cor:deltaequidatx}.

Now $j\in\Gamma^b(h)$ implies that
$$\Big|\big\langle\Op_h(a^b_{x_i})u_j,u_j\big\rangle-\mu_1\big(a^b_{x_i}\big)\Big|<\delta(h)^n|\log h|^{-\tilde\beta}$$
for some $0<\tilde\beta<(\beta-\alpha n)/2$, and is uniform for all $x_i$, $i=1,...,N(h)$. 

\noindent\textbf{Step 3}. In a small patch of $\M$ that contains $x_i$, similar to Corollary \ref{cor:deltaequidatx}, we use the local coordinate system $f_i=\Exp_{x_i}|_{D(0,2)}$. We still use $\tilde x$ as the the local coordinate of $x$ when $x\in B(x_i,2)$. So $\tilde x_i=0$.

Consider the symbol
$$a^b_{x_i}(x,\xi;h)=b\left(\frac{\tilde x}{\delta(h)}\right)\phi\left(|\tilde\xi|_x\right),$$
where $b\in C^\infty_0(D(0,2))$. We have
$$\big\langle\Op_h(a^b_{x_i})u_j,u_j\big\rangle=\int_{D(0,2)}b\left(\frac{\tilde x}{\delta(h)}\right)|u_j\circ f_i(\tilde x)|^2|Df_i(\tilde x)|\,d\tilde x+O(h),$$
and
$$\mu_1\left(a^b_{x_0}\right)=\int_{S^*B(0,2)}a^b_{x_0}(x,\xi;h)\,d\mu_1=\frac{1}{\Vol(\M)}\int_{D(0,2)}b\left(\frac{\tilde x}{\delta(h)}\right)|Df_i(\tilde x)|\,d\tilde x.$$
Hence, from Step 2, $j\in\Gamma^b(h)\subset\{j:E_j\in[1,1+h]\}$ implies that 
\begin{equation}\label{eq:scconv}
\left|\int_{D(0,2)}b\left(\frac{\tilde x}{\delta(h)}\right)|u_j\circ f_i(\tilde x)|^2|Df_i(\tilde x)|\,d\tilde x-\frac{1}{\Vol(\M)}\int_{D(0,2)}b\left(\frac{\tilde x}{\delta(h)}\right)|Df_i(\tilde x)|\,d\tilde x\right|<\delta(h)^n|\log h|^{-\tilde\beta}
\end{equation}
for some $\tilde\beta>0$, and is uniform for $x_i$, $i=1,...,{N(h)}$.

\noindent$\bullet$ Choose $b_1\in C^\infty_0(D(0,2))$, $0\le b_1\le1$, and $b_1=1$ in $D(0,1)$. Then \eqref{eq:scconv} implies that there is a positive constant $h_1\ll1$ such that if $j\in\Gamma^{b_1}(h)$ and $0<h\le h_1$, then
\begin{eqnarray*}
&&\int_{B(x_i,\delta(h))}|u_j(x)|^2\,d\Vol\\
&\le&\int_{D(0,2)}b_1\left(\frac{\tilde x}{\delta(h)}\right)|u_j\circ f_i(\tilde x)|^2|Df_i(\tilde x)|\,d\tilde x\\
&\le&\frac{1}{\Vol(\M)}\int_{D(0,2)}b\left(\frac{\tilde x}{\delta(h)}\right)|Df_i(\tilde x)|\,d\tilde x+\delta(h)^n|\log h|^{-\tilde\beta}\\
&\le&\frac{\Vol(B(x_i,2\delta(h)))}{\Vol(\M)}+\delta(h)^n|\log h|^{-\tilde\beta}\\
&\le&C\Vol(B(x_i,\delta(h))),
\end{eqnarray*}
which is uniform for $x_i$, $i=1,...,N(h)$. Here, $C$ depends on $b_1$, $\phi$, and $\M$.  From Step 1, each $B(x,\delta(h))$ is covered by at most $c_2$ balls among $\{B(x_i,\delta(h))\}_{i=1}^{N(h)}$, where $c_2$ only depends on $\M$. Hence, for each $B(x,\delta(h))\subset\M$,
$$\int_{B(x,\delta(h))}|u_j(x)|^2\,d\Vol\le C\Vol(B(x,\delta(h)))$$
if $j\in\Gamma^{b_1}(h)$ and $0<h\le h_1$, and is uniform for $x\in\M$. 

\noindent$\bullet$ Choose $b_2\in C^\infty_0(D(0,1/3))$, $0\le b_2\le1$, and $b_2=1$ in $D(0,1/6)$. Then \eqref{eq:scconv} implies that there is a positive constant $h_2\ll1$ such that if $j\in\Gamma^{b_2}(h)$ and $0<h\le h_2$, then
\begin{eqnarray*}
&&\int_{B(x_i,\delta(h)/3)}|u_j(x)|^2\,d\Vol\\
&\ge&\int_{D(0,1/3)}b_1\left(\frac{\tilde x}{\delta(h)}\right)|u_j\circ f_i(\tilde x)|^2|Df_i(\tilde x)|\,d\tilde x\\
&\ge&\frac{1}{\Vol(\M)}\int_{D(0,1/3)}b\left(\frac{\tilde x}{\delta(h)}\right)|Df_i(\tilde x)|\,d\tilde x-\delta(h)^n|\log h|^{-\tilde\beta}\\
&\ge&\frac{\Vol(B(x_i,\delta(h)/6))}{\Vol(\M)}-\delta(h)^n|\log h|^{-\tilde\beta}\\
&\ge&c\Vol(B(x_i,\delta(h))),
\end{eqnarray*}
which is uniform for $x_i$, $i=1,...,N(h)$. Here, $c$ depends on $b_2$, $\phi$, and $\M$. From Step 1, each $B(x,\delta(h))$ contains at least one ball among $\{B(x_i,\delta(h)/3)\}_{i=1}^{N(h)}$. Hence, for each $B(x,\delta(h))\subset\M$,
$$\int_{B(x,\delta(h))}|u_j(x)|^2\,d\Vol\ge c\Vol(B(x,\delta(h))).$$
if $j\in\Gamma^{b_2}(h)$ and $0<h\le h_2$, and is uniform for $x\in\M$. 

Let $h_0=\min\{h_1,h_2\}$ and $\Gamma(h)=\Gamma^{b_1}(h)\cap\Gamma^{b_2}(h)$. We have that if $0<h\le h_0$ and $j\in\Gamma(h)$, then 
\begin{equation}\label{eq:scequi}
c\Vol(B(x,\delta(h)))\le\int_{B(x,\delta(h))}|u_j(x)|^2\,d\Vol\le C\Vol(B(x,\delta(h)))
\end{equation}
uniformly for $x\in\M$, where the constants $c$ and $C$ depend only on $\M$. Moreover, 
\begin{equation}\label{eq:scdensity}
\frac{\#\Gamma(h)}{\#\{j:E_j\in[1,1+h]\}}\ge1-\frac{\#\Lambda^{b_1}(h)+\#\Lambda^{b_2}(h)}{\#\{j:E_j\in[1,1+h]\}}\to1\quad\text{as }h\to0.
\end{equation}

\noindent\textbf{Step 4}. We now pass from semiclassical version in Step 3 to its high-energy version. Recall the notations that $h=\lambda^{-1}$, $h^{-2}E_j=\lambda^2_j$, and $\delta(h)=r(\lambda)=(\log\lambda)^{-\alpha}$. Then 
$$\Delta u_j=h^{-2}E_ju_j=\lambda^2_ju_j$$ and 
$$\{j:E_j\in[1,1+h]\}=\{j:\lambda^2_j\in[h^{-2},h^{-2}+h^{-1}]\}.$$ 
Divide $\N$ into families
$$\begin{cases}
I_0=\left\{j:\lambda^2_j\in[0,1)\right\},\\
I_m=\left\{j:\lambda^2_j\in[h_m^{-2},h^{-2}_m+h_m^{-1})\right\},\quad\text{if }m\ge1,
\end{cases}$$
where $h_1=1$ and $h_{m+1}^{-2}=h_m^{-2}+h_m^{-1}$. Then the sequence
$$J=\bigcup_{m=0}^\infty\Gamma(h_m)$$
has full density in $\N=\cup_{m=0}^\infty I_m$. This is because
$$\frac{\#\Gamma(h)}{\#\{j:\lambda^2_j\in[h^{-2},h^{-2}+h^{-1}]\}}=\frac{\#\Gamma(h)}{\#\{j:E_j\in[1,1+h]\}}\to1\quad\text{as }h\to0$$
from \eqref{eq:scdensity}. Therefore, letting $J=\{j_k\}_{k=1}^\infty\subset\N$, in the view of \eqref{eq:scequi}, we have \eqref{eq:deltaequidinM}:
$$c\Vol(B(x,r(\lambda_{j_k})))\le\int_{B(x,r(\lambda_{j_k}))}|u_j(x)|^2\,d\Vol\le C\Vol(B(x,r(\lambda_{j_k})))\quad\text{as }k\to\infty,$$
uniformly for $x\in\M$.

\subsection{Further investigations}\hfill\\
\noindent\textbf{(1)}. Periodic and non-periodic points. The results in Theorems \ref{thm:deltaQEdeltamicrosymbol} and \ref{thm:deltaQEinM} are independent of whether the point $z_0\in S^*\M$ or $x_0\in\M$ is on a periodic trajectory or not. But the geodesic flow displays very different properties around periodic points and non-periodic points. This is reflected in Paul and Uribe \cite{PU}, where pointwise behavior of semiclassical measures is different around periodic and non-periodic points. However, one has to use more delicate information on the dynamical system $(S^*\M,G_t)$ to get qualitative results in its quantum counterpart.

\noindent\textbf{(2)}.  Different shrinking rates in $x$ and in $\xi$. Comparing the symbols that we used in Theorems \ref{thm:deltaQE} and \ref{thm:deltaQEinM}, the former one has the same shrinking rates in $x\in\M$ and in $\xi\in S^*_x\M$, while the latter only shrinks in $x\in\M$, and has a slightly better shrinking rate. So generally, one can localize better in $x$ than in $\xi$ without violating the uncertainty principle. Moreover, the pointwise version of the Weyl's law
$$\sum_{\lambda_j\le\lambda}|u_j(x)|^2=O(\lambda^n)$$
holds for all $x\in\M$. See also Christianson, Hassell and Toth \cite{CHT}, in which they used up to $h^\frac23$ localization in one direction to estimate the Neumann restriction bound of eigenfunctions. These evidence suggests that better localization in $x\in\M$ (or in $\xi$) is feasible, e.g. to consider symbols as
$$a^b_{x_0}(x,\xi;h)=b\left(\frac{x-x_0}{h^\rho}\right)\phi(|\xi|_x),\quad\rho\in(0,1).$$

\noindent\textbf{(3)}.  Manifolds with constant curvature or arithmetic hyperbolic manifolds. The main tool we used from dynamical system is the exponential rate of correlation \eqref{eq:rom} by Liverani \cite{Liv}. His result, however, is on general contact Anosov flows. In Theorem \ref{thm:roe}, the rate of ergodicity is proved to be controlled by the H\"older norm. Its improvement may lead to better localization in Theorems \ref{thm:deltaQE} and \ref{thm:deltaQEinM}. One expects that this can be done in manifolds with constant curvature or arithmetic hyperbolic manifolds, in which cases there are group symmetry or number theoretic results to exploit.

\section*{Acknowledgements}
I would like to thank Gabriel Rivi\`ere for informing me Liverani's paper \cite{Liv} and pointing out that Theorem \ref{thm:roe} follows from it. I also want to thank Hamid Hezari and St\'ephane Nonnenmacher for their comments that helped to improve the paper. It is a pleasure to thank Andrew Hassell for encouraging me to work on small scale quantum ergodicity and all our discussions throughout the preparation of this paper. Finally, I would like to thank the  referees for very careful reading and detailed comments that helped to improve the presentation.

\end{document}